

\documentclass[article]{IEEEtran}

\usepackage[T1]{fontenc}


\usepackage{cite}
\usepackage{amsmath,amssymb,amsfonts, amsthm}
\usepackage{xcolor} 

\usepackage{textcomp}

\usepackage{amssymb,latexsym,amsmath}
\usepackage{mathrsfs}
\usepackage{enumitem}
\usepackage{caption}
\usepackage{inputenc}

\usepackage{graphics} 
\usepackage{epsfig} 
\usepackage{verbatim}

\usepackage{amsfonts}
\usepackage{float}
\usepackage{multirow}
\usepackage{tikz}

\usetikzlibrary{shapes.geometric, arrows}
\allowdisplaybreaks

\newtheorem{definition}{Definition}[section]%
\newtheorem{theorem}[definition]{Theorem}%
\newtheorem{lemma}[definition]{Lemma}%
\newtheorem{assumption}[definition]{Assumption}%
\newtheorem{corollary}[definition]{Corollary}%
\newtheorem{remark}[definition]{Remark:}%

\IEEEoverridecommandlockouts

\allowdisplaybreaks

\usepackage{algorithmic}
\usepackage{epstopdf}

\usepackage{tikz}
\usetikzlibrary{shapes.geometric, arrows}
\allowdisplaybreaks

\usepackage{color}
\newcommand{\sy}[1]{{\color{black} #1}}

\tikzstyle{terminal}=[rectangle, rounded corners, minimum width=1cm, minimum height=1cm,text centered, draw=black]
\tikzstyle{arrow} = [thick,->,>=stealth]
\tikzstyle{addblock} = [draw,circle]






\begin{document}



\title{Stochastic Observability and Filter Stability under Several Criteria}


\author{Curtis McDonald and Serdar Y\"{u}ksel\thanks{C. McDonald is with the Department of Statistics and Data Science at Yale University, United States of America (e-mail: curtis.mcdonald@yale.edu). S.~Y\"uksel is with the Department of Mathematics and Statistics, Queen's University, Kingston, Ontario, Canada, K7L 3N6 (e-mail: yuksel@mast.queensu.ca). Research is supported by the Natural Sciences and Engineering Research Council of Canada. Some preliminary results of this submission were presented at the 2018 Annual Allerton Conference.}}%
\date{}%
\maketitle%

\begin{abstract}
Despite being a foundational concept of modern systems theory, there have been few studies on observability of non-linear stochastic systems under partial observations. In this paper, we introduce a definition of observability for stochastic non-linear dynamical systems which involves an explicit functional characterization. To justify its operational use, we establish that this definition implies filter stability under mild continuity conditions: an incorrectly initialized non-linear filter is said to be stable if the filter eventually corrects itself with the arrival of new measurement information. Numerous examples are presented and a detailed comparison with the literature is reported. We also establish implications for various criteria for filter stability under several notions of convergence such as weak convergence, total variation, and relative entropy. These findings are connected to robustness and approximations in partially observed stochastic control. 
\end{abstract}

 Keywords: Observability, Non-Linear Filtering, Filter Stability, Merging%





\section{Introduction}\label{intro}


%

Observability is one of the most important and foundational concepts of modern systems and control theory with implications at the heart of its theory and applications \cite{Caines,Kalman,kushner2014partial, KushnerKalmanFilter}. For deterministic linear systems, observability is defined as the exact recovery of any initial condition with measurements available until some finite time, and is characterized by an observability rank condition in both continuous and discrete-time \cite{Chen}. For linear systems, such an observability definition is global (as it applies for all initial states) and is also directly applicable to stochastic counterparts of deterministic linear systems. For non-linear systems, however, due to the challenges in the analysis which prevent globality, the analysis is significantly more nuanced both for deterministic and stochastic setups. See Section \ref{lit_review} for a detailed discussion.

We study the stochastic setup in this paper. Let us now introduce the probabilistic setup for a Hidden Markov Model (HMM) or Partially Observed Markov Process (POMP). Let $(\mathcal{X},\mathcal{Y})$ be complete, separable and metric (Polish) spaces equipped with their Borel sigma fields $\mathcal{B}(\mathcal{X})$ and $\mathcal{B}(\mathcal{Y})$. $\mathcal{X}$ will be called the state space, and $\mathcal{Y}$ the measurement space. Let $\mathcal{P}(\mathcal{X})$ and $\mathcal{P}(\mathcal{Y})$ be the set of probability measures on these spaces. Define the transition kernel $T$ and measurement channel $G$ as the mappings
\begin{align*}
T:&\mathcal{X} \to \mathcal{P}(\mathcal{X})
&G:\mathcal{X} \to \mathcal{P}(\mathcal{Y})\\
&x \mapsto T(dx'|x)& x \mapsto G(dy|x)
\end{align*}
 The system is initialized with a state $X_{0} \in \mathcal{X}$ drawn from a prior measure $\mu$ on $(\mathcal{X},\mathcal{B}(\mathcal{X}))$. The state is then randomly updated via the transition kernel $T$ which makes the state process $\{X_{n}\}_{n=0}^{\infty}$ a Markov Chain with initial measure $\mu$ and transition kernel $T$.
 
However, the state is not available at the observer, instead at time $n$ the observer sees the observation $Y_{n}$ where the conditional distribution of $Y_{n}|X_{n}$ is determined by the measurement channel $G$. 

By stochastic realization arguments \cite[Lemma 1.2]{gihman2012controlled}, \cite[Lemma 3.1]{BorkarRealization}, we can also view an equivalent construction of the system dynamics. Let  $\{Z_{n}\}_{n=0}^{\infty}$ and $\{W_{n}\}_{n=0}^{\infty}$ be independent  identically distributed (i.i.d)  $\mathcal{Z}$-valued noise processes, where $\mathcal{Z}$ can be taken to be $[0,1]$ or $\mathbb{R}$ (or any other Polish space), without any loss of generality. Consider a partially observed dynamical system with the following model. 
\begin{eqnarray}\label{dynamicsEqn}
 X_{n+1} &=& \sy{b}(X_n, W_n) \\
Y_n &=& h(X_n, Z_n),
\end{eqnarray}
where \sy{$W_n$ and $Z_n$} can be assumed to take values from $[0,1]$ or $\mathbb{R}$. Here $\sy{b}$ defines the system dynamics and defines a transition kernel $T$ for the Markov chain $X_n$. Assuming $Z_{n}$ has measure $Q$ in $\mathcal{Z}$, the measurement function $h$ defines the measurement channel $G$ which is the pushforward measure of $Q$ under $h(x, \cdot)$. Throughout the paper we will work with either the general kernel and measurement channel notation $T, G$ or with the specific functional form using $\sy{b}, h$ when convenient.

 Thus, the observer needs to compute the conditional probability on the hidden variable $X_n$ using the information available up to time $n \in \mathbb{Z}_+$. We have that $\{X_{n},Y_{n}\}_{n=0}^{\infty}$ is a Markov chain, and we will denote $P^{\mu}$ as the probability measure on $\Omega=\mathcal{X}^{\mathbb{Z}_{+}}\times \mathcal{Y}^{\mathbb{Z}_{+}}$, endowed with the product topology, (and thus $\omega \in \Omega$ is a sequence of states and measurements $\omega=\{(x_{i},y_{i})\}_{i=0}^{\infty}$) where $X_{0} \sim \mu$. Such a stochastic system is referred to as a Partially Observed Markov Process (POMP) (also called Hidden Markov Model) throughout the paper.
\begin{definition} We define the {\it one step predictor} as the sequence
\[\pi_{n-}^{\mu}(\cdot)=P^{\mu}(X_{n} \in \cdot|Y_{0},...,Y_{n-1}),\quad n \in \mathbb{Z}_+\]
and we define the {\it non-linear filter} as the sequence
\[\pi_{n}^{\mu}(\cdot)=P^{\mu}(X_{n} \in \cdot|Y_{0},...,Y_{n}),\quad n \in \mathbb{Z}_+.\]
\end{definition}

Both of the above are regular conditional probability sequences defined on $\mathcal{X}$. We will use the notation $Y_{[0,n]}=Y_{0},\cdots, Y_{n}$ to represent finite sets of random variables, and $Y_{[0,\infty)}=Y_{0},Y_{1},\cdots$ to represent infinite sequences. The recursive update equations for the filter or the predictor are known as the non-linear filtering equations.
Let us, for the time being, assume the existence of a likelihood function $g(x,y)$ for the measurement channel defined as follows. The measurement channel $G$ is called dominated if there exists a reference measure $\lambda$ such that $\forall x \in \mathcal{X}, G(Y_{n} \in \cdot|X_{n}=x) \ll \lambda$ where the notation ``$\ll$'' denotes absolutely continuity. We can then utilize a likelihood function $g(x,y)=\frac{dG(Y_{n} \in \cdot|X_{n}=x)}{d\lambda}(y)$ and write the filter $\pi_{n+1}^{\mu}$ recursively in terms of $\pi_{n}^{\mu}$ and $Y_{n+1}=y_{n+1}$ explicitly as a Bayesian update:
\begin{align}
&\pi_{n+1}^{\mu}(dx_{n+1})=F(\pi_{n}^{\mu},y_{n+1})(dx_{n+1})  \nonumber \\
&:=\frac{g(x_{n+1},y_{n+1})\int_{\mathcal{X}}T(dx_{n+1}|X_{n}=x)\pi_{n}^{\mu}(dx)}{\int_{\mathcal{X}}g(x_{n+1},y_{n+1})\int_{\mathcal{X}}T(dx_{n+1}|X_{n}=x)\pi_{n}^{\mu}(dx)}
\end{align}


Suppose that an observer runs a non-linear filter assuming that the initial prior is $\nu$, when in reality the prior distribution is $\mu$. The observer receives the measurements and computes the filter $\pi_{n}^{\nu}$ for each $n$, but the measurement process is generated according to the true measure $\mu$. 
 
The operational question for observability is that of \textit{filter stability}, namely, if we have two different initial probability measures $\mu$ and $\nu$, when do we have that the filter processes $\pi_{n}^{\mu}$ and $\pi_{n}^{\nu}$ merge in some appropriate sense as $n \to \infty$. In essence, when will our observations $Y_{n}$ be informative enough to correct our incorrect prior $\nu$ and result in an accurate conditional measure for the hidden state.


In Section \ref{prelim} below, notations and definitions are presented. In Section \ref{main_res} we present our main results. We present a detailed literature review after the statement of our main results in Section \ref{lit_review}. Examples of observable systems are given in Section \ref{examples}. Proofs are provided in Section \ref{proof_sec}.



\subsection{Notation and preliminaries}\label{prelim}

Let $C_{b}(\mathcal{X})$ represent the set of continuous and bounded functions from $\mathcal{X} \to \mathbb{R}$.
\begin{definition}\label{weak_def}
Two sequences of probability measures $P_{n}$, $Q_{n}$ merge weakly if $\forall f \in C_{b}(\mathcal{X})$ we have $\lim_{n \to \infty} \left|\int f dP_{n}-\int f dQ_{n}\right|=0$.
\end{definition}
\begin{definition}\label{tv_def}
For two probability measures $P$ and $Q$ the total variation norm is defined as 
$\|P-Q\|_{TV}=\sup_{\|f\|_{\infty}\leq 1}\left|\int fdP-\int fdQ \right|$
where $f$ is assumed measurable.
\end{definition}
Note that merging in total variation implies weak merging since $C_{b}(\mathcal{X})$ is a subset of the set of measurable and bounded functions. We also utilize the relative entropy (Kullback-Leibler divergence) between two probability measures, although it is not a metric (since it is not symmetric).
\begin{definition}~
\begin{itemize}
\item[(i)] For two probability measures $P$ and $Q$ we define the relative entropy as
$
D(P\|Q)=\int \log \frac{dP}{dQ} dP=\int \frac{dP}{dQ}\log \frac{dP}{dQ}dQ
$
where $P \ll Q$ and $\frac{dP}{dQ}$ denotes the Radon-Nikodym derivative of $P$ with respect to $Q$.
\item[(ii)] Let $X$ and $Y$ be two random variables, let $P$ and $Q$ be two different joint measures for $(X,Y)$ with $P \ll Q$. We define the (conditional) relative entropy between $P(X|Y)$ and $Q(X|Y)$ as
\begin{align}
& D(P(X|Y)\|Q(X|Y)) =\int  \log \left(\frac{dP_{X|Y}}{dQ_{X|Y}}(x,y)\right)P(d(x,y)) \nonumber\\
&=\int \left(\int \log \left(\frac{dP_{X|Y}}{dQ_{X|Y}}(x,y)\right)P(dx|Y=y)\right)P(dy)\label{re_cond_def}
\end{align}
\end{itemize}
\end{definition}
Some notational discussion is in order. For some probability measures such as $P^{\mu}(Y_{[0,n]} \in \cdot)$ or $P^{\mu}(X_{n} \in \cdot)$, it will be convenient to denote the random variable inside the measure and take out the set argument.
When we take the relative entropy of such measures, to make the notation shorter, we will drop the ``$\in \cdot$'' argument and write $D(P^{\mu}(Y_{[0,n]})\|P^{\nu}(Y_{[0,n]}))$. 
 
Note that in a conditional relative entropy, we are integrating the logarithm of the Radon-Nikodym derivative of the conditional measures $P(X|Y)$ and $Q(X|Y)$ over the joint measure of $P$ on (X,Y). The second equality (\ref{re_cond_def}) shows that this can be thought of as the expectation of the relative entropy $D(P(X|Y=y)\|Q(X|Y=y))$ at specific realizations of $Y=y$ , where the expectation is over the marginal measure of $P$ on $Y$. When we apply this to the filter, $\pi_{n}^{\mu}$ and $\pi_{n}^{\nu}$ are realizations of the filter for specific measurements, therefore when we discuss their relative entropy, we take the expectation over the marginal of $P^{\mu}$ on $Y_{[0,n]}$. We write this as $E^{\mu}[D(\pi_{n}^{\mu}\|\pi_{n}^{\nu})]$ where $D(\pi_{n}^{\mu}\|\pi_{n}^{\nu})$ plays the role of the inner integral in (\ref{re_cond_def}).

We now introduce some additional notation that will be useful when dealing with sigma fields rather than random variables directly. Strictly speaking, we have two probability measures $P^{\mu}$ and $P^{\nu}$ on $(\mathcal{X}^{\mathbb{Z}_{+}}\times \mathcal{Y}^{\mathbb{Z}_{+}},\mathcal{B}(\mathcal{X}^{\mathbb{Z}_{+}} \times \mathcal{Y}^{\mathbb{Z}_{+}}))$. We denote by $\mathcal{F}^{\mathcal{X}}_{a,b}$ the sigma field generated by $(X_{a},\cdots, X_{b})$ and similarly for $\mathcal{Y}$. We also write $\mathcal{F}^{\mathcal{X}}_{n}$ for the sigma field generated by $X_{n}$. We then have $\mathcal{F}^{\mathcal{X}}_{0,\infty} \vee \mathcal{F}^{\mathcal{Y}}_{0,\infty}$ as the sigma field generated by all state and measurement sequences. When we write $P^{\mu}(X_{[0,n]})$ we are discussing the measure $P^{\mu}$ restricted to the sigma field $\mathcal{F}^{\mathcal{X}}_{0,n}$ which we will denote $P^{\mu}|_{\mathcal{F}^{\mathcal{X}}_{0,n}}$. Similarly for some set $A \in \mathcal{F}^{\mathcal{X}}_{0,\infty} \vee \mathcal{F}^{\mathcal{Y}}_{0,\infty}$ we write $P^{\mu}((X_{[0,\infty)},Y_{[0,\infty)}) \in A|Y_{[0,n]})$ as the conditional measure of $P^{\mu}$ with respect to the sigma field $\mathcal{F}^{\mathcal{Y}}_{0,n}$, which we denote $P^{\mu}|\mathcal{F}^{\mathcal{Y}}_{0,n}$. We can also consider restricting and conditioning simultaneously, this for example is the case with the non-linear filter:
$\pi_{n}^{\mu}(\cdot)=P^{\mu}(X_{n} \in \cdot|Y_{[0,n]})=P^{\mu}|_{\mathcal{F}^{\mathcal{X}}_{n}}|\mathcal{F}^{\mathcal{Y}}_{0,n}$.
The key relationship between relative entropy and total variation is Pinsker's inequality (see e.g., \cite{csiszar1967information}) which states that for two probability measures $P$ and $Q$ we have that $\|P-Q\|_{TV}\leq \sqrt{\frac{2}{\log(e)}D(P\|Q)}$.

\noindent{\bf Criteria for stability.}
We note the following definitions for filters, but they can also be defined for predictors.
\begin{definition}
\begin{itemize}
\item[(i)]  A filter process is stable in the sense of weak merging in expectation if for any $f \in C_{b}(\mathcal{X})$ and any prior $\nu$ with $\mu \ll \nu$ we have
\[\lim_{n \to \infty} E^{\mu}[ |\int f d\pi_{n}^{\mu}-\int f d\pi_{n}^{\nu} |]=0.\]
\item[(ii)] A filter process is stable in the sense of weak merging $P^{\mu}$ almost surely (a.s.) if there exists a set of measurement sequences $A \subset \mathcal{Y}^{\mathbb{Z}_{+}}$ with $P^{\mu}$ probability 1 such that for any sequence in $A$, for any $f \in C_{b}(\mathcal{X})$ and any prior $\nu$ with $\mu \ll \nu$ we have
\[\lim_{n \to \infty} |\int f d\pi_{n}^{\mu}-\int f d\pi_{n}^{\nu}|=0.\]

\item [(iii)]  A filter process is stable in the sense of total variation in expectation if for any measure $\nu$ with $\mu \ll \nu$ we have
\[\lim_{n \to \infty} E^{\mu}[\|\pi_{n}^{\mu}-\pi_{n}^{\nu}\|_{TV}]=0.\]

\item[(iv)]  A filter process is stable in the sense of total variation $P^{\mu}$ a.s. if for any measure $\nu$ with $\mu \ll \nu$ we have
\[\lim_{n \to \infty} \|\pi_{n}^{\mu}-\pi_{n}^{\nu}\|_{TV}=0~~P^{\mu}~a.s.\]

\item [(v)] A filter process is stable in relative entropy if for any measure $\nu$ with $\mu \ll \nu$:
\[\lim_{n \to \infty} E^{\mu}[D(\pi_{n}^{\mu}\|\pi_{n}^{\nu})]=0.\]

\item[(vi)] For $f:\mathcal{X} \to \mathbb{R}$ define the Lipschitz norm
\begin{align*}
\|f\|_{L}=\sup\left\{\left.\frac{|f(x)-f(y)|}{d(x,y)}\right|d(x,y)\neq 0\right\}
\end{align*}
 With $\text{BLip}:=\{f : \|f\|_{L}\leq 1, \|f\|_{\infty} \leq 1\}\subset C_{b}(\mathcal{X})$ we define the bounded Lipschitz (BL) metric as
\[\|P-Q\|_{BL}=\sup_{f \in \text{BLip}}\left|\int fdP -\int f dQ\right|.\]
A system is then stable in the sense of BL-merging $P^{\mu}$ a.s. if we have
$
\|\pi_{n}^{\mu}-\pi_{n}^{\nu}\|_{BL} \to 0~~~P^{\mu}~a.s.
$
\end{itemize}
\end{definition}

We note that \textit{merging} of probability measures is different from the \textit{convergence} of a sequence of probability measures to a limit measure. In convergence, we have some sequence $P_{n}$ and a static limit measure $P$; in merging we have two sequences $P_{n}$ and $Q_{n}$ which may not individually have limits, but come closer together for large $n$ in one of the merging notions defined previously \cite{d1988merging}

\section{Statement of the Main Results and Literature Review}\label{main_res}

\subsection{Stochastic non-linear observability}\label{obs_sec}
We first introduce our notion of an observable system.
\begin{definition}
\begin{itemize}
\item[(i)][One Step Observability]\label{one_step_obs}
A POMP is said to be one step observable if for every $f \in C_{b}(\mathcal{X}),\epsilon>0$, $\exists$ a measurable and bounded function $g:\mathcal{Y} \to \mathbb{R}$ such that
\begin{align*}
\left\|f(\cdot)- \int_{Y}g(y)G(dy|\cdot) \right\|_{\infty}<\epsilon
\end{align*}
\item[(ii)][$N$ Step Observability]\label{N_step_obs}
A POMP is said to be $N$ step observable if for every $f \in C_{b}(\mathcal{X}),\epsilon>0$, $\exists$ a measurable and bounded function $g:\mathcal{Y}^{N} \to \mathbb{R}$ such that
\begin{align}\label{Nstepobs}
\left\|f(\cdot)-\int_{\mathcal{Y}^{N}}g(y_{[1,N]})P(dy_{[1,N]}|X_{1}=\cdot)\right\|_{\infty}<\epsilon,
\end{align}
where we note that the conditional probability $P(dy_{[1,N]}|X_{1}=x_1)$ is independent of the prior measure.
\item[(iii)][Observability]\label{observable}
A POMP is {\it observable} if for every 
$f \in C_{b}(\mathcal{X})$ and every $\epsilon>0$ there exist $N \in \mathbb{N}$ and a measurable and bounded function $g$ (\sy{both possibly dependent on $f$ and $\epsilon$}) such that (\ref{Nstepobs}) applies. 
Note that if a POMP is $N$ step observable for some finite $N \in \mathbb{N}$ then it is {\it observable}, but the reverse implication is not necessarily the case.
\end{itemize}
\end{definition}

A number of remarks are in order.
\begin{remark}\label{Rm1}
In the definition above, we can instead of $C_{b}(\mathcal{X})$ consider any dense subset in $C_{b}(\mathcal{X})$. For example, if $\mathcal{X}$ is a compact subset of $\mathbb{R}^k$, we can consider polynomials as these form a dense subset, or we can consider smooth functions defined on $\mathcal{X}$, or functions which are expressed as linear combinations of harmonics, Haar wavelets etc.  An example is provided in Section \ref{example1}.
\end{remark}

\begin{remark}\label{Rm3}[One-step observability and universality in the controlled setup]
The definition of one step observability is a specific case of $N$ step observability, however the distinction is useful for at least two reasons: (i) One-step observability is often easier to check since one doesn't need to consider the effect of the state transition kernel, as this definition is only concerned with the measurement channel itself. On the other hand, there exist many setups where a system is observable, but not one step observable; see e.g. Section \ref{finiteEx}. (ii) Even though in this paper we consider a control-free setup, in a controlled context studied in a companion paper \cite{MYRobustControlledFS}, it follows that one step observability would be independent of any control policy (that is, observability would be universal over all policies and associated filter stability results apply under any control policy), but $N>1$ step observability would be handled much more cautiously as this condition would be dependent on the control policy adopted. Recently, filter stability results have been shown to be consequential in showing near-optimality of finite memory control policies and associated learning theoretic results for Partially Observed Markov Decision Processes \cite{kara2020near,kara2021convergence}. Accordingly, one-step observability results are particularly applicable for such scenarios. 
\end{remark}

\begin{remark}\label{Rm2}
If the measurement kernel satisfies an absolute continuity condition so that $G(dy|x) = h(x,y) \lambda(dy)$ and if there exists a finite measure $K$ such that $\sy{G}(dy|x) \leq K(dy)$ (so that the family of kernels $\{G(dy|x), x \in \mathcal{X}\}$ is majorized by $K$ leading to a uniformly countable additive family of measures), then by Lusin’s theorem \cite[Theorem 7.5.2]{Dud02} and the extension theorem of Tietze \cite[Theorem 4.1]{dugundji}, we can replace $g$ in the above with a continuous function $g_c$. The relaxation to such continuous $g_c$ is useful when one would like to approximate the channels with those that are quantized. This then leads to an easier way to test observability via a rank condition, e.g. when $\mathcal{X}$ is finite; see Section \ref{finiteEx}.
\end{remark}

\begin{remark}\label{Rm4} One should note that the definition is not one of invertibility; it only requires that there exists some $g$ and $N$ such that the error between the conditional expectation of $g(y_{[1,N]})$ given $X_1=x$, and $f(x)$ is small. In particular, $X_1$ is not necessarily, even approximately, recoverable given the measurements. Invertibility, however, would be a special case being a sufficient condition, as we will see in the examples. 

\end{remark}

\sy{
\begin{remark}\label{Rm55}[Recovery of Initial Probability Measure]
By our definition of observability, for every $f \in C_b(\mathbb{X})$, the value is $\langle \mu, f \rangle:= E_{\mu}[f(X)]$ determinable with arbitrary precision by the measurements (since $f$ is recovered, uniformly over a given compact set, with arbitrary precision). Since a countable collection of continuous and bounded functions uniquely distinguish probability measures \cite[Theorem 3.4.5]{ethier2009markov} (that is, such continuous and bounded functions form a {\it separating class}, see also \cite[p. 13]{Bil99}), this amounts to the recovery of the initial probability measure as more and more measurements are collected. This then leads to the conclusion that our definition implies Van Handel's definition given in (\ref{vanHandelDefn}), noted further below (note that this also applies for non-compact setups under Definition \ref{local_observabilityRelaxed} as every individual probability measure is tight). 
\end{remark}
}

\subsection{Filter stability under the observability definition}

The presented observability definition leads to predictor stability in the following sense.

\begin{theorem}\label{cont_bounded}
Let
\begin{equation}\label{absContMeas}
P^{\mu}|_{\mathcal{F}^{\mathcal{Y}}_{0,\infty} } \ll P^{\nu}|_{\mathcal{F}^{\mathcal{Y}}_{0,\infty} }. 
\end{equation}
If the POMP is observable, then $\pi_{n-}^{\mu}$ and $\pi_{n-}^{\nu}$ merge weakly as $n \to \infty$, $P^{\mu}$ a.s.
\end{theorem}

A sufficient condition for (\ref{absContMeas}) is that the priors satisfy $\mu \ll \nu$. The following assumption will allow us to use the recent results in \cite{KSYWeakFellerSysCont} (see also \cite{FeKaZg14}) and conclude the weak merging of the filter in expectation from the almost sure convergence of the predictor.

\begin{assumption}\label{Ali_assumption}
The measurement channel is continuous in total variation. That is for any sequence $a_n$ with $\lim_{n \to \infty} a_{n}=a \in \mathcal{X}$ we have $\sy{ \|G(\cdot| X_0=a_n)  -  G(\cdot| X_0=a) \|_{TV} \to 0}$.
\end{assumption}
An example of a channel which is continuous in total variation is as follows \cite[Section 2.2]{KSYWeakFellerSysCont}: $Y_n = F(X_n) + W_n$ where $F$ is continuous and $W_n$ admits a continuous density function (such as a Gaussian), where an analysis based on convolution and Scheff\'e's lemma leads to the conclusion. 

 \begin{theorem}\label{Ali_Theorem}
Let Assumption \ref{Ali_assumption} hold, if the predictor merges weakly $P^{\mu}$ a.s., then the filter merges weakly in expectation.
\end{theorem}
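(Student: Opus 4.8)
The plan is to condition on the observation history, identify the filter as a one-step Bayesian measurement update of the predictor that ingests the \emph{same} fresh observation for both priors, and then transport the weak merging of the predictors through the continuity of this update map supplied by \cite{KSYWeakFellerSysCont} under Assumption~\ref{Ali_assumption}.

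First I would fix $f\in C_b(\mathcal{X})$ and a prior $\nu$ with $\mu\ll\nu$, write $G(\rho):=\int f\,d\rho$ for the associated bounded continuous functional on the space of probability measures, and let $B(\rho,y)$ denote the Bayesian measurement update of a prior $\rho$ upon observing $y$, i.e.\ the disintegration of the joint law $\rho(dx)\,h_xQ(dy)$ with respect to its $\mathcal{Y}$-marginal, so that $\pi_n^{\mu}=B(\pi_{n-}^{\mu},Y_n)$ and $\pi_n^{\nu}=B(\pi_{n-}^{\nu},Y_n)$ with one and the same realized $Y_n$. Conditioning on $\mathcal{F}^{\mathcal{Y}}_{0,n-1}$ and using that, under $P^{\mu}$, the conditional law of $Y_n$ given the past is $\theta^{\mu}(dy)=\int_{\mathcal{X}}h_xQ(dy)\,\pi_{n-}^{\mu}(dx)$, the tower property yields
\[
E^{\mu}\big[\,|G(\pi_n^{\mu})-G(\pi_n^{\nu})|\,\big]=E^{\mu}[c_n],\qquad c_n:=\int_{\mathcal{Y}}\big|G(B(\pi_{n-}^{\mu},y))-G(B(\pi_{n-}^{\nu},y))\big|\,\theta^{\mu}(dy).
\]
Because $0\le c_n\le 2\|f\|_{\infty}$ (and $\mu\ll\nu$ makes $B(\pi_{n-}^{\nu},\cdot)$ well defined $\theta^{\mu}$-a.e.), bounded convergence reduces the assertion to showing $c_n\to 0$ $P^{\mu}$-a.s.

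The engine is the continuity result of \cite{KSYWeakFellerSysCont}. Under Assumption~\ref{Ali_assumption} the measurement-update map is continuous in its measure argument, in the sense that $\rho^{(k)}\to\rho$ weakly implies $\int_{\mathcal{Y}}|G(B(\rho^{(k)},y))-G(B(\rho,y))|\,\theta_{\rho^{(k)}}(dy)\to 0$, where $\theta_{\rho}(dy):=\int h_xQ(dy)\,\rho(dx)$. The role of total-variation continuity of $x\mapsto h_xQ$ is exactly that, in the \emph{absence} of a dominating reference measure, it promotes weak convergence of the input measures to setwise (indeed total-variation) convergence of both the $\mathcal{Y}$-marginals $\theta_{\rho^{(k)}}\to\theta_{\rho}$ and the $f$-weighted numerators $\int f(x)\,h_xQ(\cdot)\,\rho^{(k)}(dx)$; this is precisely the regularity the Bayesian disintegration needs to depend continuously on the prior, and it also legitimizes integrating against the \emph{moving} reference measure $\theta^{\mu}=\theta_{\pi_{n-}^{\mu}}$ rather than against the limit. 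I would invoke this at the level stated in \cite{KSYWeakFellerSysCont} as a black box.

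It remains to transfer merging to each sample path. On the $P^{\mu}$-full event furnished by the hypothesis the predictors merge weakly, $\pi_{n-}^{\mu}-\pi_{n-}^{\nu}\to 0$, and I would deduce $c_n\to 0$ by a subsequence argument: if $c_n\not\to0$ on some path, pass to a subsequence with $c_{n_j}\ge\epsilon$, extract a further subsequence $m_j$ along which $\pi_{m_j-}^{\mu}\to\rho$ weakly; merging then forces $\pi_{m_j-}^{\nu}\to\rho$ to the \emph{same} limit, whereupon the continuity statement gives $c_{m_j}\to0$, contradicting $c_{m_j}\ge\epsilon$. The main obstacle is exactly this step: extracting weakly convergent subsequences of the predictors requires tightness, which is not automatic on a non-compact Polish state space. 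I expect to resolve it by replacing the a.s.\ statement $c_n\to0$ with convergence in $P^{\mu}$-probability---which still yields $E^{\mu}[c_n]\to0$ since $c_n$ is bounded---and by using $E^{\mu}[\pi_{n-}^{\mu}]=P^{\mu}(X_n\in\cdot)$ to obtain tightness in probability when the state marginals are tight (and otherwise restricting to the compact/tight regime). Pinning down that the continuity of \cite{KSYWeakFellerSysCont} applies verbatim with the moving reference measure, and the exact mode of convergence needed for the disintegration under Assumption~\ref{Ali_assumption}, is where the genuine work lies.
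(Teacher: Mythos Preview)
Your strategy diverges from the paper's in a material way, and the place where you flag ``the main obstacle'' is in fact a genuine gap that the paper's argument is specifically engineered to avoid. The paper does not attempt to invoke a ready-made continuity theorem for the Bayesian update and then push the predictors through it; instead, it removes the absolute value pointwise via the sets $I^{\pm}(y_{[0,n-1]})=\{y_n:G(B(\pi_{n-1}^{\mu},y_n))\gtrless G(B(\pi_{n-1}^{\nu},y_n))\}$, and then adds and subtracts $E^{\nu}[1_{I^{+}}G(B(\pi_{n-1}^{\nu},y_n))\mid Y_{[0,n-1]}]$ so as to swap the outer conditional expectation from $E^{\mu}$ to $E^{\nu}$. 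The swap costs a term bounded by $\|f\|_{\infty}\|P^{\mu}(Y_n\mid Y_{[0,n-1]})-P^{\nu}(Y_n\mid Y_{[0,n-1]})\|_{TV}$, which vanishes by Blackwell--Dubins. Once the prior in the outer expectation matches the prior inside the update, an identity from \cite{KSYWeakFellerSysCont} (not a continuity theorem, but the formula expressing $E^{\rho}[1_{I}G(B(\rho,Y_n))\mid Y_{[0,n-1]}]$ as $\int f(x)\,h_xQ(I)\,\pi_{n-}^{\rho}(dx)$) reduces everything to differences $\int \tilde f\,d\pi_{n-}^{\mu}-\int \tilde f\,d\pi_{n-}^{\nu}$ for test functions $\tilde f(x)=f(x)\,h_xQ(I)$. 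Assumption~\ref{Ali_assumption} makes this family uniformly bounded and equicontinuous in $x$ (uniformly over the indicator set $I$), and an Arzel\`a--Ascoli argument applied to the signed measures $\pi_{n-}^{\mu}-\pi_{n-}^{\nu}$ finishes.

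The crucial distinction is that the paper never extracts a weak limit of the predictor sequences themselves; it only integrates an equicontinuous family against the \emph{differences} $\pi_{n-}^{\mu}-\pi_{n-}^{\nu}$, which merge weakly to zero by hypothesis and for which tightness is claimed via Prokhorov. Your subsequence argument, by contrast, requires a weak limit $\rho$ of $\pi_{m_j-}^{\mu}$ to exist, i.e.\ tightness of the individual predictor sequence, which is neither assumed nor implied by merging and which, as you note, fails on non-compact state spaces for unstable signals. Your proposed remedy (pass to convergence in probability via tightness of $P^{\mu}(X_n\in\cdot)$) quietly imports an extra hypothesis the theorem does not carry. Separately, the $L^{1}$-type continuity you want to pull from \cite{KSYWeakFellerSysCont} as a black box---that $\rho^{(k)}\to\rho$ weakly forces $\int_{\mathcal{Y}}|G(B(\rho^{(k)},y))-G(B(\rho,y))|\,\theta_{\rho^{(k)}}(dy)\to 0$---is strictly stronger than the weak-Feller property established there and would itself require proof.
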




\subsubsection{Localized Observability for Non-Compact Signal Spaces}\label{relax_sec}

While the definition of observability that we introduced is valid for both compact and non-compact state spaces, it may be difficult to satisfy the definition in a non-compact state space with a uniform bound on the approximation error. This will be relaxed in the following, where we assume $ \mathcal{X}$ to be Euclidean.


\begin{definition}\label{local_predictable}
Given a compact set $K$, a POMP is called $K$ locally predictable if there exists a sequence of $\mathcal{F}^{\mathcal{Y}}_{0,n-1}$ (with \sy{$n \in \mathbb{N}$}) measurable mappings (random variables) $a_{n}:\mathcal{Y}^{n}\to \mathcal{X}$ such that
\begin{align*}
\pi_{n-}^{\nu}(K+a_{n})=1~P^{\mu}~a.s.
\end{align*}
for every $\mu \ll \nu$.
\end{definition}

This definition can be interpreted as follows. Regardless of the prior $\nu$, upon seeing observations $Y_{[0,n-1]}$ we can be sure $X_{n}$ lives in a compact set $K_{n}=K+a_{n}$. We can think of $a_{n}$ as a ``centring'' value based on the observations $Y_{[0,n-1]}$ and $K$ as the compact set around this centring value in which $X_{n}$ must live conditioned on observations $Y_{[0,n-1]}$. This is then paired with a definition of local observability.

\begin{definition}\label{local_observability}
Given a compact set $K$, a POMP is called $K$ locally observable if for every continuous and bounded function $f$, every sequence of numbers $a_{n}$, and every $\epsilon>0$, there exists a sequence of uniformly bounded measurable functions $g_{n}$ such that
\begin{align*}
\sup_{x \in K+a_{n}} \left|f(x)- \int_{\mathcal{Y}}g_{n}(y)G(dy|x) \right|\leq \epsilon
\end{align*}
for every $n \in \mathbb{N}$.
\end{definition}

 \begin{theorem}\label{local_thm}
Assume $\mu \ll \nu$ and that there exists a compact set $K$ such that the POMP is $K$ locally predictable and $K$ locally observable. Then the predictor merges weakly $P^{\mu}$ a.s..
\end{theorem}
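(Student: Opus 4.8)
The plan is to reduce the local statement to the same two ingredients that drive Theorem \ref{cont_bounded}: the Blackwell--Dubins merging of the one-step observation predictions, and an observability-type approximation of $f$ by functions of the form $S(g)(x)=\int_{\mathcal{Z}}g(h(x,z))Q(dz)$. The one genuinely new feature is that the approximant $g$ must now be allowed to depend on $n$ (and on the measurement path, through $a_n$), so I will need the merging to hold uniformly over the test function $g$ rather than for a single fixed $g$.

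First, since $\mu \ll \nu$ we have $P^{\mu}|_{\mathcal{F}^{\mathcal{Y}}_{0,\infty}} \ll P^{\nu}|_{\mathcal{F}^{\mathcal{Y}}_{0,\infty}}$, so Blackwell--Dubins applies and yields (\ref{BD_implied}), namely $\|P^{\mu}(Y_n \in \cdot\,|Y_{[0,n-1]}) - P^{\nu}(Y_n \in \cdot\,|Y_{[0,n-1]})\|_{TV} \to 0$, $P^{\mu}$ a.s. I would then rewrite this total variation distance in terms of the predictors. Writing the one-step observation prediction as the pushforward of $\pi_{n-}^{\mu}$ through the channel $x\mapsto h_xQ$, a Fubini computation gives $\int g\, dP^{\mu}(Y_n \in \cdot\,|Y_{[0,n-1]}) = \int_{\mathcal{X}} S(g)\, d\pi_{n-}^{\mu}$, and likewise for $\nu$. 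Taking the supremum over $\|g\|_\infty \le 1$ in the definition of the total variation norm therefore converts the Blackwell--Dubins statement into
\[
\sup_{\|g\|_\infty \le 1}\left| \int S(g)\, d\pi_{n-}^{\mu} - \int S(g)\, d\pi_{n-}^{\nu}\right| \to 0, \qquad P^{\mu}\text{ a.s.}
\]
This uniform-in-$g$ formulation is the crucial point: it is precisely what lets me accommodate an $n$- and path-dependent choice of $g$, which the single-$g$ argument of Theorem \ref{cont_bounded} could not.

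Next I would bring in the two local hypotheses. Applying $K$ local predictability with $\nu$, and separately with $\nu=\mu$ (legitimate since $\mu \ll \mu$), against the \emph{same} centring sequence $a_n$, gives $\pi_{n-}^{\mu}(K+a_n) = \pi_{n-}^{\nu}(K+a_n) = 1$, $P^{\mu}$ a.s. Now fix $f \in C_b(\mathcal{X})$, $\epsilon>0$, and a measurement path in the almost-sure set; the values $a_n$ are then fixed numbers, and $K$ local observability furnishes a uniformly bounded sequence $g_n$, say $\|g_n\|_\infty \le M$, with $\sup_{x \in K+a_n}|f(x) - S(g_n)(x)| \le \epsilon$ for every $n$. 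The triangle-inequality split
\[
\left|\int f\, d\pi_{n-}^{\mu} - \int f\, d\pi_{n-}^{\nu}\right| \le \left|\int (f - S(g_n))\, d\pi_{n-}^{\mu}\right| + \left|\int S(g_n)\, d\pi_{n-}^{\mu} - \int S(g_n)\, d\pi_{n-}^{\nu}\right| + \left|\int (S(g_n) - f)\, d\pi_{n-}^{\nu}\right|
\]
then bounds the two outer terms by $\epsilon$ each (both predictors are supported on $K+a_n$, where $|f - S(g_n)| \le \epsilon$), while the middle term is at most $M$ times the total variation distance displayed above and hence vanishes. Letting $\epsilon \downarrow 0$ closes the argument.

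I expect the main obstacle to be conceptual rather than computational: ensuring that the $n$- and path-dependence of $g_n$ does not invalidate the merging bound. The resolution is to phase the whole argument through the supremum representation of the total variation norm, so that the middle term is controlled for \emph{any} admissible $g_n$ simultaneously, using only the uniform boundedness of $\{g_n\}$ guaranteed by Definition \ref{local_observability}; no fixed-$g$ limit is ever invoked. A secondary point to verify carefully is the Fubini interchange in the identity $\int g\, dP^{\mu}(Y_n\,|\,Y_{[0,n-1]}) = \int S(g)\, d\pi_{n-}^{\mu}$, together with the $P^{\mu}$-a.s.\ legitimacy of conditioning under $P^{\nu}$ along $P^{\mu}$-typical paths, both of which rest on the hypothesis $\mu \ll \nu$.
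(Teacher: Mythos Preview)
Your proposal is correct and follows essentially the same route as the paper's own proof: both split via the triangle inequality into two approximation terms (controlled by local predictability plus local observability on $K+a_n$) and a middle term, and both emphasize that the key new ingredient beyond Theorem~\ref{cont_bounded} is invoking the full total-variation (supremum) form of Blackwell--Dubins so that the $n$- and path-dependent but uniformly bounded $g_n$ can be handled. Your identity $\int g\,dP^{\mu}(Y_n\,|\,Y_{[0,n-1]})=\int S(g)\,d\pi_{n-}^{\mu}$ is exactly the paper's Lemma~\ref{bayes_rearrange}/Corollary~\ref{H_Q_one_step}, and your explicit justification that local predictability applies to both $\pi_{n-}^{\mu}$ and $\pi_{n-}^{\nu}$ via $\mu\ll\mu$ is a point the paper leaves implicit.
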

The result above is intuitive as we can specify the compact set $K$, and the shifted sets $K_{n}=K+a_{n}$, over which we must approximate the function. However, the definitions can also be constructed taking a more relaxed approach and appealing to tightness rather than a probability one statement, but in this case it is more difficult to satisfy the local definition of observability. 

\begin{definition}\label{local_predictableRelaxed}
A POMP is called {\it locally predictable} if there exists a sequence of $\mathcal{F}^{\mathcal{Y}}_{0,n-1}$ (with \sy{$n \in \mathbb{N}$}) measurable mappings $a_{n}:\mathcal{Y}^{n}\to \mathcal{X}$ such that the family of measures
\begin{align*}
\tilde{\pi}^{\nu}_{n-}(\cdot) := \pi_{n-}^{\nu}(\cdot+a_{n})
\end{align*}
for every $\mu \ll \nu$, is a uniformly tight family of measures.
\end{definition}


\begin{definition}\label{local_observabilityRelaxed}

A POMP is called locally observable if for every continuous and bounded function $f$, every compact set $K$, every sequence of numbers $a_{n}$, and every $\epsilon>0$, there exists a sequence of uniformly bounded measurable functions $g_{n}$ such that
\begin{align*}
\sup_{x \in K+a_{n}} \left|f(x)- \int_{\mathcal{Y}}g_{n}(y)G(dy|x) \right|\leq \epsilon,
\end{align*}
\begin{align*}
\sup_{x \not\in K+a_{n}} \left| \int_{\mathcal{Y}}g_{n}(y)G(dy|x) \right|\leq 2\|f\|_{\infty}
\end{align*}
for every $n \in \mathbb{N}$.
\end{definition}

\begin{theorem}\label{local_thmRelaxed}
Assume $\mu \ll \nu$ and that the POMP is locally predictable and locally observable. Then the predictor merges weakly $P^{\mu}$ a.s.
\end{theorem}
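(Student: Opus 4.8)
The plan is to run the argument of Theorem~\ref{local_thm} on the part of the state space that tightness controls, and to absorb the uncontrolled tail using the second estimate of Definition~\ref{local_observabilityRelaxed}. Fix $f\in C_{b}(\mathcal{X})$ and $\epsilon,\delta>0$. Applying local predictability both to the given $\nu$ and to $\nu=\mu$ (legitimate since $\mu\ll\mu$ and the centering sequence $a_{n}$ is common to all admissible priors), the centered families $\tilde{\pi}^{\mu}_{n-}$ and $\tilde{\pi}^{\nu}_{n-}$ are uniformly tight; hence $P^{\mu}$-a.s.\ there is a compact set $K$, possibly realization dependent, with $\pi_{n-}^{\rho}((K+a_{n})^{c})\le\delta$ for all $n$ and $\rho\in\{\mu,\nu\}$. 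For this $K$, the sequence $a_{n}$, and $\epsilon$, local observability yields uniformly bounded measurable $g_{n}$ with $\sup_{n}\|g_{n}\|_{\infty}=:C<\infty$ satisfying the two estimates of Definition~\ref{local_observabilityRelaxed}; set $\tilde{f}_{n}(x):=\int_{\mathcal{Z}}g_{n}(h(x,z))\,Q(dz)$.

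Next I would use the decomposition
\[
\int f\,d\pi_{n-}^{\mu}-\int f\,d\pi_{n-}^{\nu}
=\int (f-\tilde{f}_{n})\,d\pi_{n-}^{\mu}
+\int \tilde{f}_{n}\,d(\pi_{n-}^{\mu}-\pi_{n-}^{\nu})
+\int (\tilde{f}_{n}-f)\,d\pi_{n-}^{\nu}.
\]
The middle term is where observability meets Blackwell and Dubins: since $P^{\rho}(Y_{n}\in\cdot\mid Y_{[0,n-1]})=\int h_{x}Q(\cdot)\,\pi_{n-}^{\rho}(dx)$ holds $P^{\mu}$-a.s.\ on a set independent of the integrand, one has $\int\tilde{f}_{n}\,d\pi_{n-}^{\rho}=\int g_{n}\,dP^{\rho}(Y_{n}\mid Y_{[0,n-1]})$, whence the middle term is bounded pathwise by $C\,\|P^{\mu}(Y_{n}\mid Y_{[0,n-1]})-P^{\nu}(Y_{n}\mid Y_{[0,n-1]})\|_{TV}$, which tends to $0$ $P^{\mu}$-a.s.\ by~(\ref{BD_implied}) (using $\mu\ll\nu\Rightarrow P^{\mu}\ll P^{\nu}$ on $\mathcal{F}^{\mathcal{Y}}_{0,\infty}$). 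For the first and third terms I would split each integral over $K+a_{n}$ and its complement: on $K+a_{n}$ the first estimate gives $|f-\tilde{f}_{n}|\le\epsilon$, while on the complement the second estimate gives $|f-\tilde{f}_{n}|\le\|f\|_{\infty}+2\|f\|_{\infty}=3\|f\|_{\infty}$, and that complement carries mass at most $\delta$; hence each of these two terms is at most $\epsilon+3\|f\|_{\infty}\delta$.

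Combining the three bounds yields, $P^{\mu}$-a.s., $\limsup_{n}\,\big|\int f\,d\pi_{n-}^{\mu}-\int f\,d\pi_{n-}^{\nu}\big|\le 2\epsilon+6\|f\|_{\infty}\delta$; letting $\epsilon,\delta\downarrow0$ along rational sequences gives the limit $0$ for each fixed $f$. To upgrade this per-$f$ statement to weak merging in the sense of Definition~\ref{merge_weak_as}, i.e.\ a single $P^{\mu}$-null set valid simultaneously for all $f\in C_{b}(\mathcal{X})$, I would intersect the countably many null sets arising from a countable convergence determining subfamily of $C_{b}(\mathcal{X})$ and invoke the tightness of the centered predictors to pass from that subfamily to all of $C_{b}(\mathcal{X})$.

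The hard part will be the pathwise treatment of the non-compact tail. In Theorem~\ref{local_thm} the predictor charges $K+a_{n}$ with full probability, so no tail arises and the approximating $g_{n}$ are deterministic; here the compact set delivered by tightness, and with it $g_{n}$ and the constant $C$, may depend on the realization. The argument must therefore be arranged so that the only invocation of almost sure convergence --- the Blackwell and Dubins term --- is controlled by a realization-independent null event, while all realization-dependent objects enter solely through the deterministic tail estimate. Checking that condition~(2) of Definition~\ref{local_observabilityRelaxed} compensates, uniformly in $n$, exactly the mass $\delta$ that tightness leaves uncaptured is the crux of the matter.
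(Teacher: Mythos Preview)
Your proposal is correct and follows essentially the same route as the paper: the same three-term decomposition, the same use of local predictability to obtain a compact set capturing most of the predictor mass, the same invocation of condition~(2) of Definition~\ref{local_observabilityRelaxed} to bound $|f-\tilde f_n|\le 3\|f\|_\infty$ on the tail, and the same appeal to Blackwell--Dubins for the middle term via the uniform bound on the $g_n$. Your treatment is in fact more careful than the paper's on two points the paper leaves implicit: the possible realization-dependence of the tightness constant (and hence of $C$), and the passage from a per-$f$ null set to a single null set valid for all $f\in C_b(\mathcal{X})$ via a countable convergence-determining class.
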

An example is in Section \ref{nonCompactObservability}.

\subsection{Relations between various criteria for filter stability}\label{extend_sec}


It follows from Pinsker's inequality that relative entropy merging implies total variation merging, which in turn implies weak merging (by Definitions \ref{weak_def} and \ref{tv_def}). In this section we are interested conditions for when the converse direction holds, i.e. weak merging implies total variation or relative entropy merging. Recall the definition that for the measurement channel $G(Y_n \in \cdot | X_n=x)$ to be {\it dominated} in the sense that there exists a reference measure $\lambda$ such that $\forall x \in \mathcal{X}, G(dy | x_{n}=x) \ll \lambda$. Then, we define the Radon-Nikodym derivative
\[g(x,y):=\frac{dG(y_{n} \in \cdot|x_{n}=x)}{d\lambda (\cdot)}(y) \]
which serves as a likelihood function (a conditional probability density function). 


\begin{assumption}\label{lebesgue_cont_control2}
\begin{itemize}
\item[(i) ]$T(\cdot|x)$ is absolutely continuous with respect to a dominating measure $\phi$ for every $x \in \mathcal{X}$, so that $ t(x_1,x) =\frac{dT(\cdot|x)}{d\phi}(x_{1})$ where $t$ is continuous in $x$ for every $x_1 \in \mathcal{X}$.
\item[(ii)] $g(x,y)$ is bounded and continuous in $x$ for every fixed $y$. Furthermore, $g(x,y) > 0$ for all $x \in \mathcal{X}, y \in \mathcal{Y}$.
\end{itemize}
\end{assumption}

\begin{assumption}\label{lebesgue_cont}
$T(\cdot|x)$ is absolutely continuous with respect to a dominating measure $\phi$ for every $x \in \mathcal{X}$, so that $ t(x_1,x) =\frac{dT(\cdot|x)}{d\phi}(x_{1})$. The family of (conditional densities) $\{t(\cdot, x)\}_{x \in \mathcal{X}}$ is uniformly bounded and equicontinuous. 
\end{assumption}




\begin{theorem}\label{weak_to_total}
Let $\mu \ll \nu$. Let either one of the following hold:
\begin{itemize}
\item[i)] Assumption \ref{lebesgue_cont_control2}, or
\item[ii)] Assumption \ref{lebesgue_cont}. 
\end{itemize}
Then, if the predictor is stable in the weak sense $P^{\mu}$ a.s. then it is also stable in total variation $P^{\mu}$ a.s..
\end{theorem}
Since the total variation of any two probability measures is uniformly bounded, stability in the almost sure sense implies that in expectation (with the same also holding for predictors). Thus, Theorem \ref{weak_to_total} also presents condition for predictor merging in total variation in expectation. 
%
%

\begin{theorem}\label{tv_equal_placeholder}
The filter merges in total variation in expectation if and only if the predictor merges in total variation in expectation.
\end{theorem}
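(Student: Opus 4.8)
The plan is to exploit the two-step structure of the filter recursion: the map from $\pi_{n}^{\mu}$ to $\pi_{(n+1)-}^{\mu}$ is the prediction step (push-forward through the transition kernel $T$), while the map from $\pi_{n-}^{\mu}$ to $\pi_{n}^{\mu}$ is the correction step (Bayesian conditioning on the fresh measurement $Y_{n}$). I would prove the two implications separately by showing that the prediction step can only contract total variation, while the correction step can inflate it by at most a bounded factor. Since a bounded factor is harmless for convergence to zero, the resulting two-sided estimate yields the equivalence.

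For the direction \textbf{filter merges $\Rightarrow$ predictor merges}, I would use that pushing a signed measure through a stochastic kernel is a total-variation contraction. Writing $\pi_{(n+1)-}^{\mu}-\pi_{(n+1)-}^{\nu}=\int T(\cdot\,|x)\,(\pi_{n}^{\mu}-\pi_{n}^{\nu})(dx)$ and testing against $|f|\le 1$ (so that $x\mapsto\int f\,T(\cdot|x)$ is again bounded by $1$) gives $\|\pi_{(n+1)-}^{\mu}-\pi_{(n+1)-}^{\nu}\|_{TV}\le\|\pi_{n}^{\mu}-\pi_{n}^{\nu}\|_{TV}$ pointwise in the measurement sequence, and hence after taking $E^{\mu}$.

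For the converse, the key is a lemma about the correction step. Condition on $Y_{[0,n-1]}$; then under $P^{\mu}$ the pair $(X_{n},Y_{n})$ has joint law $\Pi_{n}^{\mu}(dx,dy)=h_{x}Q(dy)\,\pi_{n-}^{\mu}(dx)$, and under $P^{\nu}$ it has $\Pi_{n}^{\nu}(dx,dy)=h_{x}Q(dy)\,\pi_{n-}^{\nu}(dx)$, the two sharing the common measurement kernel $h_{x}Q$. Because integrating any joint test function against this common kernel leaves a function of $x$ bounded by the same constant, one obtains $\|\Pi_{n}^{\mu}-\Pi_{n}^{\nu}\|_{TV}=\|\pi_{n-}^{\mu}-\pi_{n-}^{\nu}\|_{TV}$, and the $Y$-marginals satisfy $\|m_{n}^{\mu}-m_{n}^{\nu}\|_{TV}\le\|\pi_{n-}^{\mu}-\pi_{n-}^{\nu}\|_{TV}$. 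The filters $\pi_{n}^{\mu},\pi_{n}^{\nu}$ are exactly the disintegrations of $\Pi_{n}^{\mu},\Pi_{n}^{\nu}$ over $Y_{n}$. Selecting for each $y$ a test function $f_{y}$ with $|f_{y}|\le 1$ realizing the Hahn decomposition of $\pi_{n}^{\mu}(\cdot|y)-\pi_{n}^{\nu}(\cdot|y)$ and integrating over the \emph{true} marginal $m_{n}^{\mu}(dy)$, I split the result into $\int f_{y}\,d\Pi_{n}^{\mu}-\int f_{y}\,d\Pi_{n}^{\nu}$ plus a remainder $\int\phi(y)\,d(m_{n}^{\nu}-m_{n}^{\mu})(y)$ with $\phi(y)=\int f_{y}(x)\,\pi_{n}^{\nu}(dx|y)$, $|\phi|\le 1$. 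Bounding the two pieces by $\|\Pi_{n}^{\mu}-\Pi_{n}^{\nu}\|_{TV}$ and $\|m_{n}^{\mu}-m_{n}^{\nu}\|_{TV}$ respectively gives
\[
\int \|\pi_{n}^{\mu}(\cdot|y)-\pi_{n}^{\nu}(\cdot|y)\|_{TV}\, m_{n}^{\mu}(dy)\le 2\,\|\pi_{n-}^{\mu}-\pi_{n-}^{\nu}\|_{TV}.
\]
Taking $E^{\mu}$ over $Y_{[0,n-1]}$ then yields $E^{\mu}[\|\pi_{n}^{\mu}-\pi_{n}^{\nu}\|_{TV}]\le 2\,E^{\mu}[\|\pi_{n-}^{\mu}-\pi_{n-}^{\nu}\|_{TV}]$, so predictor merging forces filter merging.

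The step I expect to be the main obstacle is this lemma, for two reasons. First there is a measurable-selection issue: ensuring $(x,y)\mapsto f_{y}(x)$ is jointly measurable, which I would justify through the existence of regular conditional probabilities on Polish spaces together with a measurable Hahn decomposition of the disintegrated signed measure. Second, and more conceptually, the expectation is taken under the true law $P^{\mu}$ rather than $P^{\nu}$; it is precisely this asymmetry that produces the extra marginal term $\int\phi\,d(m_{n}^{\nu}-m_{n}^{\mu})$, and recognizing that this term is again controlled by the predictor distance — so that it does not obstruct convergence, at the cost only of the harmless factor $2$ — is the crux of the argument.
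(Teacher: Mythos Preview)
Your argument is correct, and it is genuinely different from the paper's proof. The paper does not compare the predictor and filter errors directly; instead it establishes, via the Radon--Nikodym structure $\frac{dP^{\mu}|_{\mathcal{G}}}{dP^{\nu}|_{\mathcal{G}}}=E^{\nu}[\frac{d\mu}{d\nu}(X_{0})|\mathcal{G}]$ and martingale convergence, that \emph{each} of the two convergences is equivalent to the tail-$\sigma$-field identity
\[
E^{\nu}\Big[\tfrac{d\mu}{d\nu}(X_{0})\,\Big|\,\textstyle\bigcap_{n\ge k}\mathcal{F}^{\mathcal{Y}}_{0,\infty}\vee\mathcal{F}^{\mathcal{X}}_{n,\infty}\Big]
=E^{\nu}\Big[\tfrac{d\mu}{d\nu}(X_{0})\,\Big|\,\mathcal{F}^{\mathcal{Y}}_{0,\infty}\Big]\quad P^{\nu}\text{-a.s.},
\]
with $k=0$ for the filter and $k=1$ for the predictor; since the $\sigma$-fields are decreasing, the two intersections coincide and the equivalence follows in one line. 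What that route buys is a structural characterization (the Kunita-type criterion) that is reused elsewhere in the paper, notably in the relative-entropy equivalence. What your route buys is an elementary, quantitative two-sided sandwich
\[
E^{\mu}\big[\|\pi_{(n+1)-}^{\mu}-\pi_{(n+1)-}^{\nu}\|_{TV}\big]\le E^{\mu}\big[\|\pi_{n}^{\mu}-\pi_{n}^{\nu}\|_{TV}\big]\le 2\,E^{\mu}\big[\|\pi_{n-}^{\mu}-\pi_{n-}^{\nu}\|_{TV}\big],
\]
obtained without any domination assumption on the measurement channel and without invoking martingale convergence; your correction-step bound is in effect a dominated-free version of van Handel's Lemma~4.2. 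The measurable-selection issue you flag is real but routine on Polish spaces (take $f_{y}$ as the sign of the density of the signed measure with respect to $\pi_{n}^{\mu}(\cdot|y)+\pi_{n}^{\nu}(\cdot|y)$, which is jointly measurable because the disintegrations are), and the absolute continuity $m_{n}^{\mu}\ll m_{n}^{\nu}$ needed to make $\pi_{n}^{\nu}(\cdot|y)$ meaningful $m_{n}^{\mu}$-a.e.\ follows from $\mu\ll\nu$.
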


Recently, the filter stability results under total variation in expectation have been shown to be consequential in showing the optimality of finite memory control policies in Partially Observed Markov Decision Processes (see \cite[Section 4.3 and Theorem 9]{kara2020near} and \cite[Theorems 3.2, 3.3 and 4.1]{kara2021convergence}).

\begin{theorem}\label{re_tv_equal}
Assume there exists some finite $n$ such that $E^{\mu}[D(\pi_{n}^{\mu}\|\pi_{n}^{\nu})]<\infty$ and some $m$ such that $D(P^{\mu}|_{\mathcal{F}^{\mathcal{Y}}_{0,m}}\|P^{\nu}|_{\mathcal{F}^{\mathcal{Y}}_{0,m}})<\infty$. Then the filter is stable in relative entropy if and only if it is stable in total variation in expectation.
\end{theorem}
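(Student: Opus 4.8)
The plan is to prove the two implications separately, since the direction ``relative entropy $\Rightarrow$ total variation in expectation'' is immediate from Pinsker's inequality while the converse carries essentially all of the difficulty. For the easy direction I would apply Pinsker's inequality (\ref{Pinsker}) pointwise in the realization of $Y_{[0,n]}$, giving $\|\pi_{n}^{\mu}-\pi_{n}^{\nu}\|_{TV}\leq\sqrt{\tfrac{2}{\log e}D(\pi_{n}^{\mu}\|\pi_{n}^{\nu})}$, then take $E^{\mu}[\cdot]$ and use Jensen's inequality (concavity of $\sqrt{\cdot}$) to obtain $E^{\mu}[\|\pi_{n}^{\mu}-\pi_{n}^{\nu}\|_{TV}]\leq\sqrt{\tfrac{2}{\log e}\,E^{\mu}[D(\pi_{n}^{\mu}\|\pi_{n}^{\nu})]}$. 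Stability in relative entropy then forces the right-hand side to $0$; the first finiteness hypothesis is only needed here to guarantee the quantities being compared are genuinely finite for large $n$.

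The converse ``total variation in expectation $\Rightarrow$ relative entropy'' is the substantive part, because in general small total variation does \emph{not} imply small relative entropy (a little mass placed where $\pi_{n}^{\nu}$ is tiny keeps $D$ large while $\|\cdot\|_{TV}\to 0$). My first step would be to establish a structural monotonicity of the filter relative entropy, recovering the Clark--Ocone supermartingale phenomenon in this discrete-time setting. Writing $D_{n}^{Y}:=D(P^{\mu}|_{\mathcal{F}^{\mathcal{Y}}_{0,n}}\|P^{\nu}|_{\mathcal{F}^{\mathcal{Y}}_{0,n}})$ and applying the chain rule for relative entropy to the joint law of $(X_{n},Y_{[0,n]})$, together with the fact that conditionally on $X_{n}$ the channel $h_{X_{n}}Q$ is common to $P^{\mu}$ and $P^{\nu}$ (so the $Y_{n}$-increment contributes zero conditional relative entropy), I would derive the identity
\[
E^{\mu}[D(\pi_{n}^{\mu}\|\pi_{n}^{\nu})] = E^{\mu}[D(\pi_{n-}^{\mu}\|\pi_{n-}^{\nu})] - \big(D_{n}^{Y}-D_{n-1}^{Y}\big).
\]
Since $D_{n}^{Y}$ is nondecreasing in $n$ (data processing for the growing observation $\sigma$-field), this gives $E^{\mu}[D(\pi_{n}^{\mu}\|\pi_{n}^{\nu})]\leq E^{\mu}[D(\pi_{n-}^{\mu}\|\pi_{n-}^{\nu})]$; and since $\pi_{n-}=\pi_{n-1}T$ with $T$ common to both measures, the data processing inequality yields $E^{\mu}[D(\pi_{n-}^{\mu}\|\pi_{n-}^{\nu})]\leq E^{\mu}[D(\pi_{n-1}^{\mu}\|\pi_{n-1}^{\nu})]$. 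Hence the filter relative entropy is nonincreasing and, once finite by the first hypothesis, converges to some $L\geq 0$; the second hypothesis ($D_{m}^{Y}<\infty$) is what keeps the chain-rule decompositions free of $\infty-\infty$ ambiguities. The goal is then reduced to showing $L=0$ under total variation merging.

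To pass from total variation to relative entropy I would use a truncation (reverse-Pinsker) argument on the filter likelihood ratio $\Lambda_{n}:=\tfrac{d\pi_{n}^{\mu}}{d\pi_{n}^{\nu}}$, which exists because $\mu\ll\nu$ propagates through the recursion. Splitting at a level $M$, the elementary bound $|t\log t|\leq C(M)\,|t-1|$ on $\{t\leq M\}$ gives $\int_{\{\Lambda_{n}\leq M\}}\Lambda_{n}\log\Lambda_{n}\,d\pi_{n}^{\nu}\leq C(M)\,\|\pi_{n}^{\mu}-\pi_{n}^{\nu}\|_{TV}$, whose $E^{\mu}$-expectation vanishes for each fixed $M$ by the total variation hypothesis, so that
\[
E^{\mu}[D(\pi_{n}^{\mu}\|\pi_{n}^{\nu})] \leq C(M)\,E^{\mu}[\|\pi_{n}^{\mu}-\pi_{n}^{\nu}\|_{TV}] + E^{\mu}\!\left[\int_{\{\Lambda_{n}>M\}}\Lambda_{n}\log\Lambda_{n}\,d\pi_{n}^{\nu}\right].
\]
Letting $n\to\infty$ and then $M\to\infty$ would give $L=0$ \emph{provided} the tail term tends to $0$ uniformly in $n$ as $M\to\infty$. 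Establishing this uniform integrability of $\{\Lambda_{n}\log\Lambda_{n}\}$ is the main obstacle. I would attack it through the change-of-measure (Bayes) representation $\Lambda_{n}(x)=E^{\nu}[\tfrac{d\mu}{d\nu}(X_{0})\mid X_{n}=x,\,Y_{[0,n]}]\big/E^{\nu}[\tfrac{d\mu}{d\nu}(X_{0})\mid Y_{[0,n]}]$, which expresses the filter ratios as ratios of conditional expectations of the fixed, integrable random variable $\tfrac{d\mu}{d\nu}(X_{0})$; the denominator is an $(\mathcal{F}^{\mathcal{Y}}_{0,n})$-martingale under $P^{\nu}$, and the finiteness hypotheses bound the total relative-entropy budget, which together should yield the required uniform control of the tails. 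This is precisely the step where the filtering structure (rather than a generic total variation estimate) is indispensable, and I expect it to be the technically delicate core of the argument; the remaining bookkeeping, and the option to work with the predictor instead of the filter via Theorem \ref{tv_equal_placeholder}, is routine.
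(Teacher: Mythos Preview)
Your easy direction (Pinsker plus Jensen) matches the paper exactly. The hard direction, however, follows a genuinely different route and contains a real gap.

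For the converse, the paper does \emph{not} attempt a truncation/reverse-Pinsker argument at all. Instead it writes, via the chain rule,
\[
E^{\mu}[D(\pi_{n}^{\mu}\|\pi_{n}^{\nu})]
= D\bigl(P^{\mu}|_{\mathcal{F}^{\mathcal{X}}_{n}\vee\mathcal{F}^{\mathcal{Y}}_{0,n}}\,\big\|\,P^{\nu}|_{\mathcal{F}^{\mathcal{X}}_{n}\vee\mathcal{F}^{\mathcal{Y}}_{0,n}}\bigr)
- D\bigl(P^{\mu}|_{\mathcal{F}^{\mathcal{Y}}_{0,n}}\,\big\|\,P^{\nu}|_{\mathcal{F}^{\mathcal{Y}}_{0,n}}\bigr),
\]
uses the Markov property to replace $\mathcal{F}^{\mathcal{X}}_{n}\vee\mathcal{F}^{\mathcal{Y}}_{0,n}$ by $\mathcal{F}^{\mathcal{X}}_{n,\infty}\vee\mathcal{F}^{\mathcal{Y}}_{0,\infty}$, and then invokes Barron's limit theorems for relative entropy over monotone families of $\sigma$-fields: the first term converges to the relative entropy on the tail field $\bigcap_{n}\mathcal{F}^{\mathcal{X}}_{n,\infty}\vee\mathcal{F}^{\mathcal{Y}}_{0,\infty}$, the second to that on $\mathcal{F}^{\mathcal{Y}}_{0,\infty}$ (the two finiteness hypotheses are precisely what activate Barron's theorems). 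Finally, Lemma~\ref{filt_iff} says that total-variation stability in expectation is \emph{equivalent} to $E^{\nu}[\tfrac{d\mu}{d\nu}(X_{0})\mid\cdot]$ coinciding $P^{\nu}$-a.s.\ on these two limiting fields; hence the two limiting relative entropies are equal and their difference is zero. No uniform-integrability estimate on $\Lambda_{n}\log\Lambda_{n}$ is ever needed.

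Your proposed route stalls exactly where you anticipate. You correctly establish that $n\mapsto E^{\mu}[D(\pi_{n}^{\mu}\|\pi_{n}^{\nu})]$ is nonincreasing and hence bounded, but boundedness of $E^{\mu}\!\big[\!\int\Lambda_{n}\log\Lambda_{n}\,d\pi_{n}^{\nu}\big]$ is only an $L^{1}$ bound on $\Lambda_{n}\log\Lambda_{n}$, which does \emph{not} by itself give uniform integrability; your appeal to the Bayes representation of $\Lambda_{n}$ as a ratio of conditional expectations does not close this, since the denominator $E^{\nu}[\tfrac{d\mu}{d\nu}(X_{0})\mid Y_{[0,n]}]$ can be arbitrarily small on sets of positive $P^{\mu}$-measure and you have no moment control beyond the relative-entropy budget you are trying to bound. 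The paper's $\sigma$-field argument sidesteps this entirely by identifying the limit directly rather than squeezing it between truncated and tail pieces.
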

We note that both of the conditions on the finiteness of relative entropies in Theorem \ref{re_tv_equal} are minor and hold for example if $D(\mu\|\nu)<\infty$. In the special setup where the measurement sigma field is the trivial one (with no information), or more generally $Y_{n}$ is independent of $X_n$; the above recovers the following result which generalizes Barron \cite{BarronSorrento,BarronInfoMartingales} and Fritz \cite{Fritz73}, who had established the relative entropy convergence of sequences of probability measures for each time stage to the invariant measure for reversible Markov chains. This result also generalizes Theorem 5 of Harremo\"es and Holst \cite{HarremosHolst} which considers countable state space chains with a uniform irreducibility assumption. 

\begin{theorem}\label{setwiseConv}
Let $X_t$ be a Markov chain with $\pi$ being its unique invariant probability measure. Let $\pi_t$ denote the measure $P^{\pi_0}(X_t \in \cdot)$, where $X_0 \sim \pi_0$. Let $\pi_t \to \pi$ in total variation. If $D(\pi_{t_0}||\pi) < \infty$ for some $t_0 < \infty$, then \[D(\pi_t || \pi) \downarrow 0.\]
In particular, for an aperiodic positive Harris recurrent Markov chain, if $D(\pi_{t_0}||\pi) < \infty$ for some $t_0 < \infty$, then $D(\pi_t || \pi) \downarrow 0$.
\end{theorem}

\begin{proof} The proof follows directly from Theorem \ref{re_tv_equal}. In the special case of positive Harris recurrence, the result follows since for aperiodic positive Harris recurrent Markov chains, $\pi_t \to \pi$ in total variation (see Theorem 13.0.1 in \cite{MeynBook}). \end{proof}

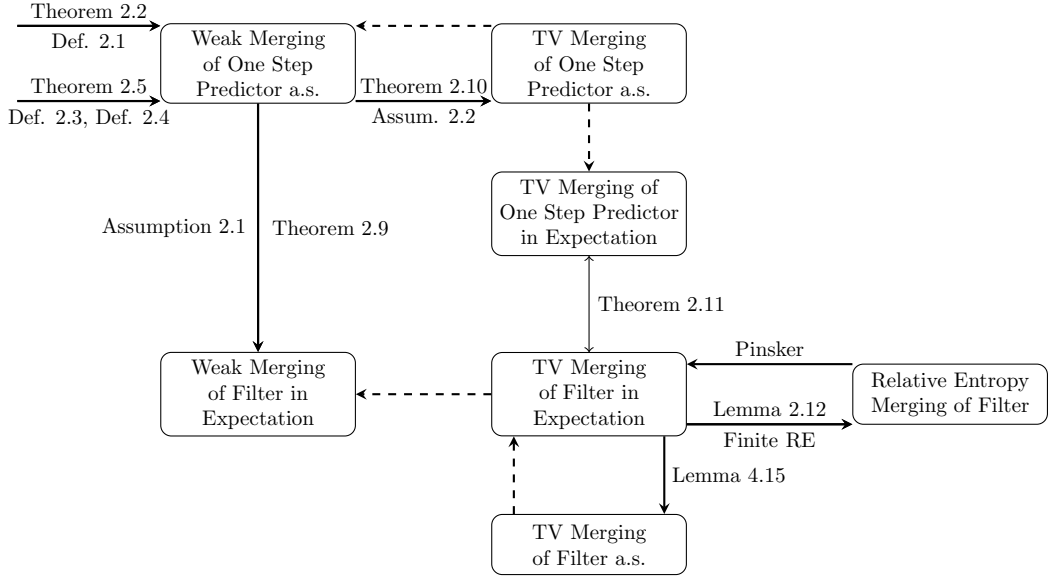
\begin{figure}[t]
\begin{center}
\begin{tikzpicture}[scale=0.4, every node/.style={transform shape}]

\node (weak_pred) [terminal,  text width = 3cm] {Weak Merging of One Step Predictor a.s.};

\node (tv_pred_a_s) [terminal, right of = weak_pred, xshift = 5.5cm, text width = 4cm] {TV Merging of One Step Predictor a.s.};

\node (tv_pred_exp) [terminal, below of = tv_pred_a_s, yshift = -1.5 cm, text width = 4cm] {TV Merging of One Step Predictor in Expectation};

\node (tv_filt) [terminal, below of = tv_pred_exp, yshift = -2  cm, text width = 4cm] {TV Merging of Filter in Expectation};

\node (re_filt) [terminal, right of = tv_filt, xshift = 7  cm, text width = 4cm]  {Relative Entropy Merging of Filter};

\node (tv_filt_as) [terminal, below of = tv_filt, yshift = -2.5 cm, text width = 4 cm] {TV Merging of Filter a.s.};

\node (weak_filt) [terminal, left of = tv_filt, xshift = -5.5 cm, text width = 3cm] {Weak Merging of Filter in Expectation};

Arrows
\draw[arrow, transform canvas={yshift=0.5cm}] +(-4cm, 0cm) -- node[anchor=south] {Theorem \ref{cont_bounded}} (weak_pred);

\draw[arrow, transform canvas={yshift=0.5cm}] +(-4cm, 0cm) -- node[anchor=north] {Def. \ref{observable}} (weak_pred);

\draw[arrow, transform canvas={yshift=-0.5cm}] +(-4cm, 0cm) -- node[anchor=south] {Theorem \ref{local_thm}} (weak_pred);

\draw[arrow, transform canvas={yshift=-0.5cm}] +(-4cm, 0cm) -- node[anchor=north] {Def. \ref{local_predictable}, Def. \ref{local_observability}} (weak_pred);


\draw[arrow, transform canvas={yshift=-0.5cm}] (weak_pred) -- node[anchor=south] {Theorem \ref{weak_to_total}} (tv_pred_a_s);

\draw[arrow, dashed] (tv_pred_a_s) -- node[anchor=south] {} (tv_pred_exp);


\draw[arrow, transform canvas={yshift=-0.5cm}] (weak_pred) -- node[anchor=north] {Assum. \ref{lebesgue_cont}} (tv_pred_a_s);

\draw[arrow, transform canvas={yshift=+0.5cm},dashed] (tv_pred_a_s) -- node[anchor=north] {}(weak_pred);

\draw[arrow, transform canvas={yshift=-0.4cm}] (tv_filt) -- node[anchor=south] {Theorem \ref{re_tv_equal}} (re_filt);
\draw[arrow, transform canvas={yshift=-0.4cm}] (tv_filt) -- node[anchor=north] {Finite RE} (re_filt);

\draw[arrow, transform canvas={yshift=+0.4cm}] (re_filt) -- node[anchor=south] {Pinsker} (tv_filt);

\draw[arrow, transform canvas={xshift=+1cm}] (tv_filt) -- node[anchor=west] {Lemma \ref{almost_sure_stability}} (tv_filt_as);

\draw[arrow, transform canvas={xshift=-1cm},dashed] (tv_filt_as) -- node[anchor=south] {} (tv_filt);

\draw[<->] (tv_pred_exp) -- node[anchor=center, xshift = 1.2cm] {Theorem \ref{tv_equal_placeholder}} (tv_filt);

\draw[arrow] (weak_pred) -- node[anchor=center, xshift=1.2cm] {Theorem \ref{Ali_Theorem}} (weak_filt);

\draw[arrow] (weak_pred) -- node[anchor=center, xshift=-1.4cm] {Assumption \ref{Ali_assumption}} (weak_filt);

\draw[arrow, dashed] (tv_filt) -- node[anchor=center] {} (weak_filt);

\end{tikzpicture}
\caption{Flow of Ideas and Conditions for Filter Stability}
\label{fig:proof_diagram}
\end{center}
\end{figure}

\subsection{Literature review and comparison of results}\label{lit_review}

For deterministic linear systems, exact recovery of any initial condition with measurements available until some finite time is defined as observability and is characterized by an observability rank condition in both continuous and discrete-time \cite{Chen}. For linear systems, such an observability definition is global (as it applies for all initial states) and is universal in the control policies applied, as the control policy does not affect the estimation errors (known as the {\it no-dual effect} \cite{bartse74} property). For non-linear systems, however, due to the challenges in the analysis which prevent globality as well as control-dependence, more modest and localized definitions are to be imposed: For deterministic continuous-time non-linear systems \cite{hermann1977nonlinear} and \cite{sontag1984concept} present local indistinguishability conditions with subtle differences, and establish relations with Lie-theoretic characterizations which generalize observability rank conditions for non-linear systems defined locally. For discrete-time deterministic models, observability has also been defined by invertibility or exact recovery of an initial state, locally, given measurements with finitely many observations. Nijmeier \cite{nijmeijer1982observability} developed discrete-time analogues of the observability notions presented in \cite{hermann1977nonlinear} (see also \cite{sontag1984concept} for sampled continuous-time systems). Liu and Bitmead \cite{liu2011stochastic,liu2010observability} introduce a non-linear stochastic observability definition through entropy, where the conditional entropy of the hidden state given measurements not being the same as the unconditional entropy implies observability. Ugrinoovski \cite{ugrinovskii2003observability} also presents an information theoretic formulation, and defines observability as an informativeness condition.

In the filtering literature for control systems, the classical setup involves the linear Gaussian system. The filter in this case is the celebrated Kalman filter, where the finite-dimensional Kalman filter is computed recursively using the Riccati equation. Under linear observability and controllability conditions, the Riccati equation admits a unique solution \cite{KushnerKalmanFilter,kushner2014partial,Caines}, which is the unique limit of the Riccati recursions regardless of the initialization. Thus, the Kalman Filter is stable with respect to incorrect, though still Gaussian, priors under the aforementioned conditions (we note that partial convergence and robustness results on the asymptotic equivalence of conditional expectations and linear estimates for non-Gaussian priors for linear systems are reported in \cite{sowers1992discrete}). \sy{The time-varying linear system setup has been studied in \cite{anderson1981detectability}.}

In a recent paper, we studied the implications on filter stability in robust control \cite{MYRobustControlledFS}. Implications on finite memory approximations in optimal stochastic control have been presented in \cite{kara2020near,kara2021convergence}. It is worth pointing out that there has been a recurrent theme on the the duality between controllability and observability, for a recent work in this direction see \cite{kim2022duality,kim2019lagrangian}. Filter stability for deterministic systems under noisy measurements has recently been studied in \cite{reddy2019stability}.

A strict version of our observability definition is captured in \cite[Equation 1.7]{chigansky2006role}. The idea there is to express, exactly, a continuous function $f(x)$ by integrating a measurable function $g(y)$ over the conditional distribution for $Y$ given $X=x$. 
A fundamental result which pairs with observability is that of Blackwell and Dubins \cite{blackwell1962merging}, an implication of which \cite{chigansky2006role} independently arrived at. Blackwell and Dubins use martingale convergence theorem to show that if $P$ and $Q$ are two measures on a fully observed stochastic process $\{X_{n}\}_{n=0}^{\infty}$ with $P \ll Q$, then the conditional distributions on the future based on the past merge in total variation $P$ a.s., that is $P$ a.s.
\begin{align*}
\|P(X_{[n+1,\infty)} \in \cdot|X_{[0,n]})-Q(X_{[n+1,\infty)} \in \cdot|X_{[0,n]})\|_{TV} \to 0.
\end{align*} 
\cite{van2009observability} introduces a definition of observability for POMPs. Namely, a system is observable if every prior results in a unique probability measure on the measurement sequences;
 \begin{align}\label{vanHandelDefn}
 P^{\mu}|_{\mathcal{F}^{\mathcal{Y}}_{0,\infty}}=P^{\nu}|_{\mathcal{F}^{\mathcal{Y}}_{0,\infty}} \implies \mu = \nu.
 \end{align}
\cite{van2009observability} shows that the above leads to filter stability for continuous-time models with compact state space. \cite{van2009uniform} extends these results to non-compact state spaces, where {\it uniform observability} is introduced. 
The result of Blackwell and Dubins \cite{blackwell1962merging} is utilized to show that uniform observability would imply filter stability in bounded Lipschitz distance \cite{van2008discrete}. Nonetheless, this condition is implicit; \cite{van2008discrete} only studies the measurement channel where $h(x,z)=f(x)+z$ where $f^{-1}$ is uniformly continuous and $Z$ must have an everywhere non-zero characteristic function (e.g. a Gaussian distribution). 
 %
For a compact state space, \cite{van2009uniform} has established that uniform observability and observability are equivalent notions. We also note that for a finite state space with a non-degenerate measurement channel (i.e. likelihood function $g(x,y) >0$), stability can be fully characterized via observability and a detectability condition \cite{van2009observability}, \cite[Theorem V.2]{van2010nonlinear} or \cite[Theorems 2.7 and 3.1]{chigansky2010complete}.

\sy{As noted in Remark \ref{Rm55}, our definition implies (\ref{vanHandelDefn}). (\ref{vanHandelDefn}) is a  statement of invertibility with no clear guidance on how to test this property; our definition is explicitly given in a test function formulation, making it more interpretable and easier to apply to various systems of interest. Additionally, in the work studied here, we consider discrete time processes and thus the predictor and the filter are distinct objects. Our definition of observability only implies the weak merging of the predictor almost surely, not the filter directly. Conditions are needed to relate the merging of the predictor to that of the filter. This is also addressed in our paper building also on recent results on the regularity properties of non-linear filters from \cite{KSYWeakFellerSysCont} (see also \cite{FeKaZg14}).} 

In early work by Kunita, \cite{kunita1971asymptotic}, the stability of the filter process is studied in light of the limit sigma fields of the processes (e.g. $\mathcal{F}^{\mathcal{Y}}_{0,\infty}, \mathcal{F}^{\mathcal{X}}_{0,\infty}$). Kunita's work unfortunately made a technical error on the exchange of orders in supremum and intersection operations on sigma fields: A concise derivation of the corrected result is presented in \cite[Equation 1.10]{chigansky2009intrinsic}: here, we are presented with a sufficient and necessary condition for the merging of the filter in total variation in expectation based on comparing the sigma fields $\mathcal{F}^{\mathcal{Y}}_{0,\infty}$ and $\bigcap_{n \geq 0}\mathcal{F}^{\mathcal{X}}_{n,\infty} \vee \mathcal{F}^{\mathcal{Y}}_{0,\infty}$.  That is the filter merges in total variation in expectation if and only if:
\begin{align}
E^{\nu}[\frac{d\mu}{d\nu}(X_{0})|\mathcal{F}^{\mathcal{Y}}_{0,\infty}]=E^{\nu}[\frac{d\mu}{d\nu}(X_{0})|\bigcap_{n \geq 0}\mathcal{F}^{\mathcal{X}}_{n,\infty} \vee \mathcal{F}^{\mathcal{Y}}_{0,\infty}]~~~P^{\mu}~a.s.\nonumber
\end{align}

%

 
Relative entropy as a measure of discrepancy between the true filter and the incorrectly initialized filter is studied by Clark, Ocone, and Coumarbatch in \cite{clark1999relative}. Here, the authors considered the filtering problem in continuous time and with a dominated measurement channel. The authors established the relative entropy of the true filter and the incorrect filter as a supermartingale, and its convergence to a limit. However, the paper did not establish the convergence to zero. A notable setup where actual convergence (of the relative entropy) to zero is established is the (rather specific) Bene{\v{s}} filter studied in \cite{ocone1999asymptotic}. This problem also has relations to the relative entropy convergence of Markov chains to invariance: in the case where the measurements are trivial, the convergence problem reduces to what has been studied in \cite{BarronSorrento, BarronInfoMartingales,Fritz73,HarremosHolst,harremoes2010thinning} on relative entropy convergence of Markov chains to invariant measures.

{\bf Contributions and comparison with the literature.} In view of the review above, our contributions are as follows:
\begin{itemize}[leftmargin=*]
\item[i)] [Stochastic observability] In Section \ref{obs_sec}, we present a definition of stochastic observability. This definition is functionally explicit and testable, and due to its functional approximation characterization, it allows various analytical methods to be applicable for verification (see Remarks \ref{Rm1} through Remark \ref{Rm4}). 

Under this definition, we establish predictor stability (in the weak convergence/merging sense). We note that observability, for the discrete time case as studied here, only implies weak merging of the predictor almost surely, not the filter directly.  This is addressed in our paper building also on recent results on the regularity properties of non-linear filters from \cite{KSYWeakFellerSysCont} (see also \cite{FeKaZg14}). 

 We also note that the Blackwell and Dubins theory of merging on which our approach builds (similar to \cite{van2009observability,van2009uniform}), applies for infinite sequences of future events (i.e. $P^{\mu}(Y_{[n+1,\infty)}|Y_{[0,n]})$), this is utilized in our definition of  $N$ step observability leading to application examples of broad generality. Accordingly, our definition is not only a function of the measurement channel, but also of the system dynamics; unlike some related results in the literature. 
 
 Additionally, we provide several examples in Section \ref{examples}. 

\item[ii)] [On various convergence and merging criteria] We establish new results relating various criteria for filter stability (as depicted in Figure \ref{fig:proof_diagram}), independent of the mechanism used to arrive at filter stability: we study filter stability under weak merging and total variation merging in expectation and almost surely, as well as relative entropy. In Section \ref{extend_sec}, (a) We place mild assumptions on the transition/measurement kernels to extend weak merging of the predictor to total variation merging. (b) We show that total variation merging of the predictor and filter are equivalent, and (c) under a mild finiteness condition on the relative entropy sequence, we also establish equivalence of relative entropy merging and total variation merging. 

Using the chain rule for relative entropy, the relative entropy error was shown to be a non-increasing sequence by Clark et.al. \cite{clark1999relative}, but {\it its convergence to zero was not established}, except for the specific case of the Bene{\v{s}} filter in \cite{ocone1999asymptotic}. Theorem \ref{re_tv_equal} establishes the equivalence between relative entropy merging and total variation, and thus convergence of the relative entropy error to zero is proven here (we note that this is a result which is hinted at in the literature, see \cite[Remark 4.2]{chigansky2009intrinsic} or \cite[Remark 5.9]{van2009stability} but not explicitly proven). This result applies to setups beyond filter stability: In the case where the measurements are trivial, Theorem \ref{setwiseConv} generalizes \cite{BarronSorrento, BarronInfoMartingales,Fritz73,HarremosHolst} on relative entropy convergence of Markov chains to invariant measures, where the first references due to Barron and Fritz had considered reversible Markov chains and the latter due to Harremo\"es and Holst focused on countable state Markov chains under a uniform irreducibility assumption. On Theorem \ref{tv_equal_placeholder}, we note first that much of the literature focuses on continuous time, where the predictor is not used in the analysis. In discrete time, \cite[Lemma 4.2]{van2008discrete} proves that the merging of the predictor in total variation in expectation implies that of the filter. However this result relies on a domination assumption in the measurement channel and the specific structure of the filter recursion equation \cite[Equation 1.4]{chigansky2009intrinsic}. Theorem \ref{tv_equal_placeholder} is, accordingly, a more general result.

\item[iii)] [Implications to near optimality of finite window policies in POMDPs] Our findings lead to practically relevant and mathematically consequential implications to robustness and approximations for controlled partially observable models; i.e., POMDPs:  \cite{MYRobustControlledFS} has studied controlled filter stability where it was shown that one-step observability introduced here leads to stochastic observability universal over all admissible control policies, which then leads to refined robustness results when compared with \cite{KYSICONPrior}. In this paper, we consider the control-free case, which allows us to consider $N$-step observability, with $N > 1$. Additionally, we present numerous explicit examples, which, in the one-step observable setup, is then directly applicable to such robustness results. Recently, filter stability results under total variation (as well as weak convergence under slightly more restrictive setups) have been shown to be consequential in showing the optimality of finite memory control policies in Partially Observed Markov Decision Processes (POMDPs); see \cite[Section 4.3 and Theorem 9]{kara2020near} and \cite[Theorems 3.2, 3.3 and 4.1]{kara2021convergence} where connections with weak merging and total variation merging are made explicit in the approximation error bounds (see \cite{white1994finite} for an earlier study where the dependence on filter stability is implicit; further related recent studies include \cite{golowich2022planning}). Accordingly, the results in this paper, notably Theorems \ref{weak_to_total} and \ref{tv_equal_placeholder}, are directly applicable in showing that with merging under total variation in expectation, one can show that optimal policies for POMDPs can be approximated by those which use only finite window of measurements and control actions.
\end{itemize}

\section{Observable System and Measurement Channel Examples}\label{examples} We note that in this section, it will be more convenient to describe our measurement channels via the equivalent functional realization (see (\ref{dynamicsEqn})), with explicit noise variable $Z_{n}$ and a measurement function $Y_{n}=h(X_{n}, Z_{n})$, and thus this will the convention we will use to define the measurement channel $G$ for the examples presented in the following.

\subsection{Finite state and noise space}\label{finiteEx}
Consider a finite setup $\mathcal{X}=\{a_{1},\cdots, a_{n}\}$ and $\mathcal{Z}=\{b_{1},\cdots,b_{m}\}$. Now, assume $h(x,z)$ has $K$ distinct outputs, where $1\leq K\leq (n)(m)$ and $\mathcal{Y}=\{c_{1},\cdots,c_{K}\}$. \sy{We note that for such a setup, there is already a sufficient and necessary condition provided in \cite[Theorem V.2]{van2010nonlinear}. However, we examine this case to show that our definition is equivalent to the sufficient direction of this theorem, which is the notion of observability presented in\cite{van2009observability}.}

For each $x$, \sy{$h_{x}(\cdot):=h(x, \cdot)$} can be viewed as a partition of $\mathcal{Z}$, assigning each $b_{i} \in \mathcal{Z}$ to an output level $c_{j}\in \mathcal{Y}$. We can track this by the matrix $H_{x}(i,j)=1$ if $h_{x}(b_{i})=\sy{c_{j}}$ and zero else. Let $Q$ be the $1 \times m$ vector representing the probability measure of the noise. Let us first consider the one step observability. Let $g(c_{i})=\alpha_{i}$, with $\alpha = \begin{bmatrix}\alpha_1 \\ \alpha_2 \\ \vdots \\ \alpha_K \end{bmatrix}$, and $\int_{\mathcal{Z}} g(h(x,z))Q(dz)$ $=:QH_{x}\alpha
$. Therefore, any function $f(x)$ can be expressed as a $n \times 1$ vector and hence the question reduces to finding a vector $\alpha$ so that $f =  QH \alpha$, and the system is one step observable if and only if the matrix
$
A\equiv\begin{pmatrix}
QH_{a_{1}}\\
\vdots\\
QH_{a_{n}}
\end{pmatrix}
$
is rank $n$. 
%

Consider then $N$ step observability. We wish to solve equations of the form
\begin{align}
f(x)=\int_{\mathcal{Y}^{N}}g(y_{[1,N]})dP^{\mu}(y_{[1,N]}|x_{1}=x)\label{finite_N_obs}
\end{align}
With knowledge of $Q,h(\cdot,\cdot)$ and $T$ we can directly compute the transition kernel for the joint measure $Y_{[1,n]}|X_{1}$, however the size of this matrix is $n$ by $K^{n}$ where $|\mathcal{X}|=n,|\mathcal{Y}|=K$ so complexity grows exponentially. We can deduce a sufficient, but not necessary, condition for $n$ step observability using the marginal conditional measures. Consider that
$P^{\mu}(y_{k}\in \cdot|X_{1}=a_{j})=T(a_{j}|:)T^{k-2}A,~k\geq 2$
where $T(a_{j}|:)$ represents the $j^{th}$ row of the transition matrix. Note that these are all $1 \times K$ vectors and represent the marginal measures of $Y_{k}|X_{1}$. 
Consider the class of functions $\mathcal{G}^{n}=\{g:\mathcal{Y}^{n} \to \mathbb{R}\}$ and a subclass $\mathcal{G}^{n}_{LC}=\{g(y_{[1,n]})=\sum_{i=1}^{n}\alpha_{i}g_{i}(y_{i})|\alpha_{i} \in \mathbb{R},g_{i}\in \mathcal{G}^{1}\}$. That is, a linear combination of functions of the individual $y_{i}$ values. We can use these functions to deduce a sufficient, but not necessary, condition for observability.

\begin{lemma}\label{marginal_rank}
Assume that $|\mathcal{X}|=n$ and define the matrix
\begin{align*}
M=\begin{pmatrix}
A&
TA&
\cdots&
T^{n-1}A
\end{pmatrix}
\end{align*}
which is $n \times nK$ where $K=|\mathcal{Y}|$. If $M$ is rank $n$, then the system is $n$ step observable. Furthermore, if $M$ is not rank $n$, appending more blocks of the form $T^{k}A$ for $k\geq n$ will not increase the rank of $M$.
\end{lemma}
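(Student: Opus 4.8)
The plan is to exploit that on a finite state space the observability condition collapses to a statement about the range of a linear map. Since $|\mathcal{X}|=n$, every $f \in C_b(\mathcal{X})$ is just a vector in $\mathbb{R}^n$, and the map $g \mapsto S_n(g)$ with $S_n(g)(x)=\int_{\mathcal{Y}^n} g(y_{[1,n]})\,P(dy_{[1,n]}\mid X_1=x)$ is linear, so its image is a subspace of $\mathbb{R}^n$. Because a subspace is closed, the ability to approximate every $f$ to within arbitrary $\epsilon$ is equivalent to hitting every $f$ exactly; hence $n$-step observability is equivalent to surjectivity of $S_n$ onto $\mathbb{R}^n$. I would first record this reduction, as it converts Definition \ref{N_step_obs} into a rank condition.

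The crux is then to identify the image of $S_n$ restricted to the additive subclass $\mathcal{G}^n_{LC}$ with the column space of $M$. For $g(y_{[1,n]})=\sum_{k=1}^n g_k(y_k)$, linearity of the integral lets me marginalize each summand against the one-dimensional conditional law of $Y_k$ given $X_1=a_j$. Using the formula $P(y_k\in\cdot\mid X_1=a_j)$ = ($j$-th row of $T^{k-1}A$, the case $k=1$ being the $j$-th row of $A$ itself) and representing each $g_k$ as a vector $\gamma_k\in\mathbb{R}^K$, a short computation gives
\[
S_n(g) = \sum_{k=1}^n T^{k-1} A \gamma_k = M\gamma, \qquad \gamma=(\gamma_1,\dots,\gamma_n)^\top .
\]
As $\gamma$ ranges over $\mathbb{R}^{nK}$ this sweeps out exactly $\mathrm{colspace}(M)$. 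Thus if $\mathrm{rank}\,M=n$ the column space is all of $\mathbb{R}^n$, so $S_n|_{\mathcal{G}^n_{LC}}$ — and \emph{a fortiori} $S_n$ on the full class $\mathcal{G}^n$ — is surjective, giving $n$-step observability. The restriction to $\mathcal{G}^n_{LC}$ is precisely why the rank condition is sufficient but not necessary: the full class $\mathcal{G}^n$ contains non-additive functions whose images need not lie in $\mathrm{colspace}(M)$.

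For the final assertion I would invoke Cayley–Hamilton. Since $T$ is $n\times n$, $T^n$ is a linear combination of $I,T,\dots,T^{n-1}$, so $T^n A$ is a linear combination of the blocks $A,TA,\dots,T^{n-1}A$ already in $M$ and its columns add nothing to $\mathrm{colspace}(M)$; an easy induction extends this to every $T^k A$ with $k\geq n$, so appending further blocks leaves the rank unchanged. Equivalently, $\mathrm{colspace}(M)$ is the smallest $T$-invariant subspace containing $\mathrm{colspace}(A)$.

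I expect the only delicate point to be the interchange of summation and integration that produces the marginal laws, together with the bookkeeping that the marginal of $Y_k$ under $P(\cdot\mid X_1=a_j)$ is indeed the $j$-th row of $T^{k-1}A$; everything else is linear algebra. The Cayley–Hamilton step is entirely standard, mirroring the Kalman rank condition for observability of linear systems.
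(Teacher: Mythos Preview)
Your proposal is correct and follows essentially the same route as the paper: restrict to the additive subclass $\mathcal{G}^n_{LC}$, compute the marginal laws $P(Y_k\in\cdot\mid X_1=a_j)$ as the $j$-th row of $T^{k-1}A$, identify the resulting linear map with $M$, and invoke Cayley--Hamilton for the final assertion. Your explicit remark that approximation collapses to exact surjectivity (since the image is a closed subspace of $\mathbb{R}^n$) and your induction step extending Cayley--Hamilton to all $k\geq n$ are nice clarifications that the paper leaves implicit.
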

\begin{proof}
Begin with (\ref{finite_N_obs}), consider a restriction to $\mathcal{G}^{n}_{LC}$, that is we require $g$ to be of the form $g(y_{[1,n]})=\sum_{i=1}^{n}g_{i}(y_{i})$. Denote the $(nK) \times 1$ vector\\
$\sy{\alpha=(g_{1}(c_{1}),\cdots, g_{1}(c_{K}),\cdots, g_{n}(c_{1}),\cdots,g_{n}(c_K))}$. Then 
\begin{align*}
&f(x)=\sum_{i=1}^{n}P^{\mu}(y_{i}\in \cdot|X_{1}=x)\begin{pmatrix}
g_{i}(c_{1})\\
\vdots\\
g_{i}(c_{K})
\end{pmatrix} \\
&=\begin{pmatrix}
QH_{x}&T(x|:)A&\cdots&T(x|:)T^{n-2}A
\end{pmatrix}
\alpha
\end{align*}
We can then see that this matrix is the $j$ row of $M$ when $x=a_{j}$, therefore we have
$
\begin{pmatrix}
f(a_{1})\\
\vdots\\
f(a_{n})
\end{pmatrix}=\begin{pmatrix}
A&
TA&
\cdots&
T^{n-1}A
\end{pmatrix}
\alpha
$. If $M$ is rank $n$, then any function $f:\mathcal{X} \to \mathbb{R}$ can be expressed as a vector $g$ put through matrix $M$ and the system is observable.

Consider if $M$ is not rank $n$ and if we append another block $T^{n}A$  to $M$. By the Cayley-Hamilton theorem, $T^{n}$ is a linear combination of lower powers of $T$, e.g. $T^{n}=\sum_{i=0}^{n}\alpha_{i}T^{i}$ for some coefficients $\alpha_{i}$. Therefore this additional block is a linear combination of the previous blocks, and adds no dimension to the matrix $M$.
\end{proof}

If the conditions of this lemma fail, i.e. $M$ is not rank $n$, that means integrating $g$ over the marginal measures cannot generate any $f$ function. Yet the product of the marginal measures is not the joint measure since the $Y_{i}|X_{1}$ are not independent. Hence, working with the marginal measures only is not enough to determine observability as also noted in \cite[Remark 13]{van2009observability} in a slightly different setup. 

Consider the following example.
Let $\mathcal{X}=\{1,2,3,4\}$ and $Y=1_{X\leq 2}$. This can be realized as
\begin{align*}
A=\begin{pmatrix}
QH_{1}\\
\vdots\\
QH_{4}
\end{pmatrix}&=\begin{pmatrix}
0&1\\
0&1\\
1&0\\
1&0
\end{pmatrix}
\end{align*}
Consider the following transition kernel,
\begin{align*}
T=
\begin{pmatrix}
0&\frac{1}{4}&\frac{1}{4}&\frac{1}{2}\\
\frac{1}{2}&0&0&\frac{1}{2}\\
0&\frac{1}{4}&\frac{1}{4}&\frac{1}{2}\\
\frac{1}{2}&0&0&\frac{1}{2}
\end{pmatrix}
\end{align*}
Notice that the odd and even rows are identical. If we consider the marginal measures of $Y_{1}|X_{1},\cdots,Y_{4}|X_{1}$ we have the matrix
\begin{align*}
&\begin{pmatrix}
A&
\cdots&
T^{3}A
\end{pmatrix}=\\
&
\begin{pmatrix}
0	&1	&0.75	&0.25	&0.5625	&0.4375	&0.609375	&0.390625\\
0	&1	&0.50	&0.50	&0.6250	&0.3750	&0.593750	&0.406250\\
1	&0	&0.75	&0.25	&0.5625	&0.4375 &0.609375	&0.390625\\
1	&0	&0.50	&0.50	&0.6250 &0.3750	&0.593750	&0.406250
\end{pmatrix}
\end{align*}
which is only rank 3, not rank 4. Therefore, we cannot use the marginal measures to determine observability.

 However, if we consider the joint measure of $(Y_{1},Y_{2})|X_{1}$ we have the matrix
\begin{align*}
A'=\begin{pmatrix}
0&0&\frac{3}{4}&\frac{1}{4}\\
0&0&\frac{1}{2}&\frac{1}{2}\\
\frac{3}{4}&\frac{1}{4}&0&0\\
\frac{1}{2}&\frac{1}{2}&0&0
\end{pmatrix}
\end{align*}
Where row $i$ is conditioned on $x=i$ and the columns are ordered in binary $y_{2}y_{1}$, e.g. $P(y_{1}=1,y_{2}=0|x_{1}=2)$ is row 2 column 3. This matrix is full rank, hence the system is $N$ step observable with $N=2$, even though the marginal measures failed to be full rank.

\subsection{Compact state and noise spaces with affine observations}\label{example1}
Consider $\mathcal{X},\mathcal{Z}$ as compact subsets of $\mathbb{R}$ and let $h(x,z)=a(z)x+b(z)$ for some functions $a,b$ where the image of $\mathcal{Z}$ under $a$ and $b$ is compact (this ensures that $\mathcal{Y}$ is compact). Note that for a fixed choice of $z$, this is an affine function of $x$. We will arrive at sufficient conditions for one step observability. Since $\mathcal{X}$ is compact, the set of polynomials is dense in the set of continuous and bounded functions. Therefore, rather than working with a function $f \in C_{b}(\mathcal{X})$ without loss of generality we assume $f$ is a polynomial. Let $\mathcal{M}_{b}(\mathbb{R})$ represent the measurable and bounded functions on the real line and consider the mapping
\begin{align*}
&S:\mathcal{M}_{b}(\mathbb{R}) \to C_{b}(\mathbb{R})&S(g)(\cdot)\mapsto \int_{Z}g(h(\cdot,z))Q(dz)
\end{align*}
Let $\mathbb{R}[x]_{n}$ represent the polynomials on the real line up to degree $n$. Then we have that $S(g)$ is invariant on $\mathbb{R}[x]_{n}$, that is if $g$ is polynomial of degree $n$ then $S(g)$ is a polynomial of degree $n$. Furthermore, the coefficients of $S(g)(x)=\sum_{i=0}^{n}\beta_{i}x^{i}$ can be related to the coefficients of $g(x)=\sum_{i=0}^{n}\alpha_{i}x^{i}$ by a linear transformation. Define $
N(i,k)=\binom{i}{k}E(a(Z)^{k}b(Z)^{i-k})
$
then by recursive application of binomial theorem we have
\begin{align*}
\begin{pmatrix}
\beta_{0}\\
\beta_{1}\\
\beta_{2}\\
\vdots\\
\beta_{n}
\end{pmatrix}&=\begin{pmatrix}
N(0,0)&N(1,0)&\cdots&N(n,0)\\
0&N(1,1)&\cdots&N(n,1)\\
\vdots&\ddots&\ddots&\vdots\\
0&\cdots&0&N(n,n)
\end{pmatrix}\begin{pmatrix}
\alpha_{0}\\
\alpha_{1}\\
\alpha_{2}\\
\vdots\\
\alpha_{n}
\end{pmatrix}
\end{align*}
if we want to generate any polynomial, we require this matrix to be invertible, and since it is upper triangular this amounts to none of the diagonal entries being zero, that is $E[a(z)^{n}]\neq 0~\forall n \in \mathbb{N}$. Furthermore, we want  $g$ to be bounded so we must have $N(n,k)<\infty~\forall n \in \mathbb{N},k \in \{0,\cdots,i\}$.
\subsection*{Example}
Consider $\mathcal{X}=[-10,10]$, $\mathcal{Z}=[-1,1]$, $Z\sim \text{Uni}([-1,1])$ and $y=z^{2}x+z$. We then have $\mathcal{Y}=[-11,11]$. For any $n \in \mathbb{N}$ we have
\begin{align*}
E[a(z)^{n}]&=\frac{1}{2}\int_{-1}^{1}z^{2n}dz=\frac{1}{2n+1}\neq 0
\end{align*}
additionally, for any $n \in \mathbb{N},k \in \{0,\cdots,n\}$ we have
\begin{align}
N(n,k)&=\binom{n}{k}E(a(z)^{k}b(z)^{n-k})=\binom{n}{k}E(z^{n+k}) \nonumber  \\
&=\binom{n}{k}\frac{1}{n+k+1}<\infty \nonumber
\end{align}
\subsection{A non-linear measurement function}
Consider $\mathcal{X}$ as a compact subset of $\mathbb{R}$, $\mathcal{Z}=\mathbb{R}$. Let $h(x,z)=1_{x >z}x+1_{x\leq z}z$ and assume that $Q$ admits a density with respect to Lebesgue. We have
\begin{align*}
\int_{\mathcal{Z}}g(h(x,z))Q(dz)&=\int_{-\infty}^{x}g(x)q(z)dz+\int_{x}^{\infty}g(z)q(z)dz
\end{align*}
again, we can approximate any continuous and bounded function $f$ on $\mathcal{X}$ as polynomial, so we assume $f$ is differentiable. We have
\begin{align*}
f(x)&=\int_{-\infty}^{x}g(x)q(z)dz+\int_{x}^{\infty}g(z)q(z)dz\\
f'(x)&=g(x)q(x)+\int_{-\infty}^{x}g'(x)q(z)dz-g(x)q(x) \\
& =g'(x)Q(Z\leq x)
\end{align*}

Since $\mathcal{X}$ is compact there exists some $x_{\min} \in \mathbb{R}$ such that $x_{\min}<x$, $\forall x\in \mathcal{X}$. We require for some $\epsilon>0$ that $Q(Z<x_{min})>\epsilon$. This condition says every $x\in \mathcal{X}$ has some positive probability of being observed through $h(x,z)$ and we will not always get pure noise. Then we have
\begin{align*}
g'(x)&=1_{\mathcal{X}}(x)\frac{f'(x)}{Q(Z\leq x)}\\
g(x)&=c+\int_{-\infty}^{x}1_{\mathcal{X}}(u)\frac{f'(u)}{Q(Z\leq u)}du
\end{align*}
for some constant $c$. Therefore, we only need to define $g$ over $\mathcal{X}$. Furthermore, we require $g$ to be bounded, which is implied if $g'$ is bounded since $g$ is only defined over a compact space.



%
%
%


\subsection{Local observability for a non-compact state space}\label{nonCompactObservability} 

We now study a system which does not have a compact state signal space and satisfies the definitions of local predictability and local observability, so that we can apply Theorem \ref{local_thmRelaxed}. Consider the POMP with the following transition and measurement kernels
\begin{align*}
X_{n+1}&=X_{n}+N(1,1)\\
Y_{n}&=\begin{cases}
X_{n}+1&w.p.~\frac{1}{2}\\
X_{n}-1&w.p.~\frac{1}{2}
\end{cases}
\end{align*}
We will first show this system is locally predictable. Given an observation $Y_{n-1}$, it must be that $X_{n-1}=Y_{n-1}-1$ or $Y_{n-1}+1$, therefore any filter at time $n-1$ will consist of two point masses at $Y_{n-1}-1$ and $Y_{n-1}+1$ with the probability of these two points dependent on the prior. Therefore the predictor at time $n$ will be a convex combination of Gaussian random variables \sy{$\alpha_{n} {\cal N}(Y_{n-1}, 1)+(1-\alpha_{n}){\cal N}(Y_{n-1}+2,1)$} where $\alpha_{n}$ is determined by the prior. 

However, regardless of the value of $\alpha$  for any $\epsilon>0$ have some compact set $K_{\epsilon}$ such that $\pi_{n-}^{\nu}(K_{\epsilon}+Y_{n-1}) >1-\epsilon$ for any choice of $\nu$. Therefore the system is locally predictable.

For local observability, assume $K$ is an interval $[-M,M]$ for some whole number $M>0$ and pick a centering value $a$. Fix a continuous and bounded function $f$. We wish to demonstrate a function $g$ that approximates $f$ well over $K+a$ when integrated over the the measurement channel. $g$ must be bounded with a bound that does not depend on $a$.
If we define $g(y)$ recursively as follows:
\begin{align*}
g(y)&=\begin{cases}
0&y< -M+a+1\\
2f(y-1)&y \in [-M+a+1, -M+a+3)\\
2f(y-1)-g(y-2)&y  \in [-M+a+3, M+a+1]\\
-g(y-2)& y > M+a+1
\end{cases}
\end{align*}
$g$ is akin to a telescoping sum in that it cancels out its own previous values. We have
\begin{align*}
\int g(h(x,z))Q(dz)&=\frac{1}{2}(g(x+1)+g(x-1))
\end{align*}
For $x < -M+a$ we have $x-1<x+1<-M+a+1$ hence $g(x-1)=g(x+1)=0$. 
For $x \in [-M+a,-M+a+2)$ we have $g(x+1)=2f(x+1-1)=2f(x)$ while $g(x-1)=0$. Then for $x \in [-M+a+2, M+a]$ we have 
\begin{align*}
g(x+1)=2f(x+1-1)-g(x+1-2)=2f(x)-g(x-1)
\end{align*}
which will cancel with the other $g(x-1)$ term, hence the telescoping. For $x>M+a$ we have $g(x+1)=-g(x+1-2)=-g(x-1)$ hence it will cancel with the previous value.

In each iteration of telescoping, $\|g\|_{\infty}$ increases by at most $2\|f\|_{\infty}$, there are $2M$ iterations of telescoping so the overall bound on $\|g\|_{\infty}$ is $4M\|f\|_{\infty}$. Therefore we have
\begin{align*}
\|g\|_{\infty}&\leq 4M\|f\|_{\infty}\\
\int g(h(x,z)Q(dz)&=f(x) & x \in [-M+a,M+a]\\
\left|\int g(h(x,z)Q(dz)\right|&=0 &x \not\in [-M+a,M+a]
\end{align*}
this proves local observability.



\section{Proofs}\label{proof_sec}

\subsection{Observability: Proof of Theorem \ref{cont_bounded}}\label{proofcont_bounded}
\begin{lemma}\label{bayes_rearrange}
Let $g$ be a bounded and measurable function on $(\mathcal{Y}^{k+1},\mathcal{B}(\mathcal{Y}^{k+1}))$. For any initial prior $\mu$ we have
\begin{align}
&\int_{\mathcal{Y}^{k+1}}g(y_{[n,n+k]})P^{\mu}(dy_{[n,n+k]}|Y_{[0,n-1]}) \nonumber \\
&=\int_{\mathcal{X}}\int_{\mathcal{Y}^{k+1}}g(y_{[n,n+k]})P(dy_{[n,n+k]}|X_{n}=x_{n})\pi_{n-}^{\mu}(dx_{n})\label{bayes_RHS}
\end{align}
\end{lemma}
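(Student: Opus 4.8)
The plan is to recognize the left-hand side as the conditional expectation $E^{\mu}[g(Y_{[n,n+k]}) \mid Y_{[0,n-1]}]$ and to evaluate it via the tower property, inserting the current state $X_n$ as an intermediate conditioning variable. The essential structural fact I would exploit is that, under the model defined by $T$, $h$, and the i.i.d.\ noise law $Q$, the future observation block $Y_{[n,n+k]}$ is conditionally independent of the past observations $Y_{[0,n-1]}$ given the current state $X_n$. This is the Markov property of the hidden chain combined with the representation $Y_{m}=h(X_m,Z_m)$ with $\{Z_m\}$ i.i.d.\ and independent of the state process: given $X_n=x_n$, the states $X_{[n,n+k]}$ evolve forward from $x_n$ through $T$ alone, and the observation noise $Z_{[n,n+k]}$ is fresh, so nothing about $Y_{[0,n-1]}$ survives into the conditional law of $Y_{[n,n+k]}$.

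Concretely, I would first write, using the tower property over the sigma field $\mathcal{F}^{\mathcal{X}}_{n} \vee \mathcal{F}^{\mathcal{Y}}_{0,n-1}$,
\[
E^{\mu}[g(Y_{[n,n+k]}) \mid Y_{[0,n-1]}] = E^{\mu}\bigl[\, E^{\mu}[g(Y_{[n,n+k]}) \mid X_n, Y_{[0,n-1]}] \,\bigm|\, Y_{[0,n-1]}\bigr].
\]
By the conditional independence just described, the inner conditional expectation reduces to a function of $X_n$ alone,
\[
E^{\mu}[g(Y_{[n,n+k]}) \mid X_n=x_n,\, Y_{[0,n-1]}] = \int_{\mathcal{Y}^{k+1}} g(y_{[n,n+k]})\, P(dy_{[n,n+k]} \mid X_n=x_n),
\]
and crucially this kernel carries no superscript $\mu$: it is determined by $T$, $h$, and $Q$, and is therefore independent of the prior. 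The outer expectation then integrates this function of $x_n$ against the conditional law of $X_n$ given $Y_{[0,n-1]}$, which is by definition the one-step predictor $\pi_{n-}^{\mu}$, producing exactly the right-hand side of (\ref{bayes_RHS}).

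To make the conditional independence step fully rigorous I would first establish the identity for product functions $g(y_{[n,n+k]})=\prod_{i=0}^{k} \mathbf{1}_{A_i}(y_{n+i})$ with $A_i \in \mathcal{B}(\mathcal{Y})$, where the required factorization follows directly from the defining transition equations for $P^{\mu}$, and then extend to arbitrary bounded measurable $g$ by a functional monotone class argument (linearity handles finite linear combinations and bounded convergence handles monotone bounded limits). Since $\mathcal{X}$ and $\mathcal{Y}$ are Polish, all the regular conditional probabilities above exist and the disintegrations are legitimate. I expect the main obstacle to be the careful bookkeeping of the conditional independence, and in particular the justification that the inner kernel $P(dy_{[n,n+k]} \mid X_n=x_n)$ is genuinely prior-free; once that prior-independence is pinned down from the Markov structure, the remainder is a routine application of the tower property and the definition of $\pi_{n-}^{\mu}$.
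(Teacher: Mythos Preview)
Your proposal is correct and follows essentially the same route as the paper: introduce $X_n$ as an intermediate conditioning variable (the paper phrases this as extending to the joint measure on $(Y_{[n,n+k]},X_n)$ and applying the chain rule for conditional probabilities, which is exactly your tower-property step), then invoke the Markov/conditional-independence structure to drop $Y_{[0,n-1]}$ and the prior from the inner kernel, and finally recognize the outer integral as integration against $\pi_{n-}^{\mu}$. Your additional remark about a monotone-class argument is extra rigor that the paper omits, but the underlying argument is the same.
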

\smallskip
\begin{proof}
\begin{align*}
&\int_{\mathcal{Y}^{k+1}}g(y_{[n,n+k]})P^{\mu}(dy_{[n,n+k]}|Y_{[0,n-1]}) \\
&=\int_{\mathcal{Y}^{k+1}\times \mathcal{X}}g(y_{[n,n+k]})P^{\mu}(d(y_{[n,n+k]},x_{n})|Y_{[0,n-1]})
\end{align*}
we then apply the chain rule for conditional probability measures and we have
\begin{align*}
&\int_{\mathcal{X}}\int_{\mathcal{Y}^{k+1}}g(y_{[n,n+k]})P^{\mu}(dy_{[n,n+k]}|x_{n},Y_{[0,n-1]})\pi^{\mu}_{n-}(dx_{n})
\end{align*}
Since $\{(X_{n},Y_{n})\}_{n=0}^{\infty}$ is a Markov chain chain, $Y_{[n,n+k]}$ is conditionally independent of $Y_{[0,n-1]}$ given $X_{n}$. Additionally, the prior does not determine the conditional measure, therefore we have 
\begin{align*}
\int_{\mathcal{X}}\int_{\mathcal{Y}^{k+1}}g(y_{[n,n+k]})P(dy_{[n,n+k]}|x_{n})\pi^{\mu}_{n-}(dx_{n})
\end{align*}
where we do not include a prior in the superscript of the conditional measure, since the conditional measure is the same regardless of the prior.
\end{proof}

\begin{corollary}\label{H_Q_one_step}
Let $g$ be a bounded and measurable function on $(\mathcal{Y},\mathcal{B}(\mathcal{Y}))$. For any prior $\mu$ we have
\begin{align}
\int_{\mathcal{Y}}g(y_{n})P^{\mu}(dy_{n}|X_{n}=x)=\int_{\mathcal{Z}}g(h_{x}(z))Q(dz)\label{H_Q}
\end{align}
\end{corollary}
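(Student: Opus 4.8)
The plan is to identify the conditional law $P^{\mu}(dy_n \mid X_n = x)$ with the pushforward measure $h_x Q$ introduced in the model setup, and then invoke the elementary change-of-variables formula for pushforwards. First I would recall that, by construction of the POMP, conditioned on $X_n = x$ the measurement satisfies $Y_n = h(x, Z_n) = h_x(Z_n)$ with $Z_n \sim Q$ independent of the state process; equivalently, the one-step measurement kernel is precisely $P(dy_n \mid X_n = x) = h_x Q(dy_n)$, where $h_x Q(A) = Q(h_x^{-1}(A))$ as defined in the introduction. Crucially, as already observed in the proof of Lemma \ref{bayes_rearrange} (``the prior does not determine the conditional measure''), this conditional measure does not depend on the prior, which justifies writing $P^{\mu}(dy_n \mid X_n = x) = h_x Q(dy_n)$ and suppressing the superscript $\mu$. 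In this sense the corollary is simply the $k=0$ instance of the prior-independence fact exploited there, specialized to the explicit form of the measurement channel.

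Second, with the conditional law identified, the claim reduces to the statement that integrating $g$ against $h_x Q$ equals integrating $g \circ h_x$ against $Q$:
\begin{align*}
\int_{\mathcal{Y}} g(y_n) \, (h_x Q)(dy_n) = \int_{\mathcal{Z}} g(h_x(z)) \, Q(dz).
\end{align*}
This is the standard pushforward (change-of-variables) identity, valid for any bounded measurable $g$ by the usual approximation argument: it is immediate for indicator functions $g = 1_A$ since $(h_x Q)(A) = Q(h_x^{-1}(A))$ by definition, extends to simple functions by linearity, and then to all bounded measurable $g$ by a monotone-class / monotone-convergence passage.

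I do not anticipate a genuine obstacle here, as the statement is essentially a restatement of the model definition combined with a textbook change of variables. The only point requiring any care is the justification that the relevant conditional distribution is genuinely prior-independent, but this is precisely the conditional-independence observation established in Lemma \ref{bayes_rearrange} and is therefore already available to us.
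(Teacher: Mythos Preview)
Your proposal is correct and follows essentially the same route as the paper: identify $P^{\mu}(dy_n\mid X_n=x)$ with the pushforward $h_xQ$ (using the prior-independence already noted), and then apply the change-of-variables formula for pushforward measures. The only cosmetic difference is that the paper cites the change-of-variables theorem from Bogachev directly rather than sketching the indicator/simple/bounded approximation, but the argument is the same.
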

\begin{proof}
$Z$ is a random variable on the probability space $(\mathcal{Z},\mathcal{B}(\mathcal{Z}),Q)$ and $Y_{n}$ exists on the measurable space $(\mathcal{Y},\mathcal{B}(\mathcal{Y}))$. Then, for every fixed choice of $X_{n}=x$ we have that $Y_{n}$ is a fixed function of $Z$, that is $Y_{n}=h_{x}(Z)$. For any set $A \in \mathcal{B}(\mathcal{Y})$ we have $P^{\mu}(Y_{n} \in A|X_{n}=x)=Q(h_{x}^{-1}(A))$. Yet this means that $P^{\mu}(Y_{n} \in \cdot|X_{n}=x)$ is exactly the pushforward measure of $Q$ under the mapping $h_{x}$, call this measure $h_{x}Q(A)=Q(h_{x}^{-1}(A))$. We then have:
\begin{align*}
\int_{\mathcal{Y}}g(y)h_{x}Q(dy))=\int_{\mathcal{Z}}g(h_{x}(z))Q(dz).
\end{align*}


\end{proof}

Notice that the inner integral in the RHS of Equation~(\ref{bayes_RHS}) is a function of $x$. The LHS  is then the term considered in the total variation merging of the predictive measures of the measurement sequences, while the RHS is the term considered in the weak merging of the one-step predictor. We can then leverage Blackwell and Dubin's theorem to arrive at a sufficient condition for weak merging of the one-step predictor. Theorem \ref{cont_bounded} is closely related to \cite[Prop. 3.11]{van2009uniform} and its proof is in essence a sufficient condition for uniform observability (of the predictor).

\begin{proof}[\textbf{Proof of Theorem \ref{cont_bounded}}]~\\
Fix any $f \in C_{b}(\mathcal{X})$ and $\epsilon>0$. We wish to show that $\exists N$ such that $\forall n>N$, $$\left|\int fd\pi_{n-}^{\mu}-\int fd\pi_{n-}^{\nu} \right|<\epsilon$$
\sy{By observability for the fixed $f$, (\ref{Nstepobs}) holds for some $N'+1$}. Therefore we can find some $g$ with $\|g\|_{\infty}<\infty$ such that $$\tilde{f}(x)=\int_{\mathcal{Y}^{N'+1}}g(y_{[1,1+N']})P(dy_{[1,1+N']}|X_{1}=x)$$  and $\|f-\tilde{f}\|_{\infty}<\frac{\epsilon}{3}$. Conditioned on the value of $X_{n}=x$ and since the noise is i.i.d, the conditional channel $Y_{[n,n+N']}|X_{n}$ is time invariant, so it holds that
\begin{align*}
\tilde{f}(x)=\int_{\mathcal{Y}^{N'+1}}g(y_{[n,n+N']})P(dy_{[n,n+N']}|X_{n}=x)
\end{align*}
is the same regardless of the choice of $n$.
 Then we have
\begin{align}
& \left|\int fd\pi_{n-}^{\mu}-\int fd\pi_{n-}^{\nu} \right| \nonumber \\
&\leq \left|\int \tilde{f}d\pi^{\mu}_{n-}-\int \tilde{f}d\pi_{n-}^{\nu} \right|+\left|\int (f-\tilde{f})d\pi^{\mu}_{n-} \right|  \nonumber \\
& +\left|\int (f-\tilde{f})d\pi^{\nu}_{n-} \right|\label{approx_eqn}
\end{align}

Now, by assumption $\|f-\tilde{f}\|_{\infty}<\frac{\epsilon}{3}$ therefore the last two terms are less than $\frac{2}{3}\epsilon$. We then apply Lemma \ref{bayes_rearrange} and we have
\begin{align*}
&\left|\int \tilde{f}d\pi^{\mu}_{n-}-\int \tilde{f}d\pi_{n-}^{\nu} \right|+\frac{2}{3}\epsilon\\
&=|\int_{\mathcal{Y}^{N'+1}} g(y_{[n,n+N']})P^{\mu}(dy_{[n,n+N']} |Y_{[0,n-1]}) \\
& \quad -\int_{\mathcal{Y}^{N'+1}}g(y_{[n,n+N']})P^{\nu}(dy_{[n,n+N']}|Y_{[0,n-1]})|+\frac{2}{3}\epsilon
\end{align*}

By Assumption \ref{absContMeas}, we have $P^{\mu}(Y_{[0,\infty)}\in \cdot) \ll P^{\nu}(Y_{[0,\infty)} \in \cdot)$. Then via a classic result by Blackwell and Dubins \cite{blackwell1962merging}, we have that $P^{\mu}(Y_{[n,n+N']} \in \cdot |Y_{[0,n-1]})$ and $P^{\nu}(Y_{[n,n+N']} \in \cdot|Y_{[0,n-1]})$ merge in total variation $P^{\mu}$ a.s. as $n \to \infty$. Define $\tilde{g}=\frac{g}{\|g\|_{\infty}}$. Then $\exists~N\in \mathbb{N}$ such that $\forall n>N$,
\begin{align*}
&|\int_{\mathcal{Y}^{N'+1}} \tilde{g}(y_{[n,n+N']})P^{\mu}(dy_{[n,n+N']} |Y_{[0,n-1]}) \\
& -\int_{\mathcal{Y}^{N'+1}}\tilde{g}(y_{[n,n+N']})P^{\nu}(dy_{[n,n+N']}|Y_{[0,n-1]})| <\frac{\epsilon}{3\|g\|_{\infty}}
\end{align*}
we then have:
\begin{align*}
&|\int_{\mathcal{Y}^{N'+1}} g(y_{[n,n+N']})P^{\mu}(dy_{[n,n+N']} |Y_{[0,n-1]}) \\
& -\int_{\mathcal{Y}^{N'+1}}g(y_{[n,n+N']})P^{\nu}(dy_{[n,n+N']}|Y_{[0,n-1]})|+\frac{2}{3}\epsilon\\
&\leq \|g\|_{\infty}\frac{\epsilon}{3 \|g\|_{\infty}}+\frac{2}{3}\epsilon=\epsilon
\end{align*}
therefore, since $f$ and $\epsilon$ are arbitrary we have for any $f \in C_{b}(S)$:
$\lim_{n \to \infty}\left|\int fd\pi_{n-}^{\mu}-\int fd\pi_{n-}^{\nu} \right|=0,$ which means $\pi_{n-}^{\mu}$ and $ \pi_{n-}^{\nu}$ merge weakly.
\end{proof}

\subsection{Weak Filter Stability: Proof of Theorem \ref{Ali_Theorem}}\label{proofAli}
Here we will utilize results from \cite{KSYWeakFellerSysCont}. This paper was concerned with a different topic than filter stability, namely the weak Feller property of the ``filter update'' kernel. That is, one can view the filter $\pi_{n}^{\mu}$ and the measurement $Y_{n}$ as its own Markov chain $\{(\pi_{n}^{\mu},Y_{n})\}_{n=0}^{\infty}$ which takes values in $\mathcal{P}(\mathcal{X}) \times \mathcal{Y}$. The filter update kernel is the transition kernel of this Markov chain. We will not study this kernel, but some of the analysis in \cite{KSYWeakFellerSysCont} is useful in providing concise arguments to connect the filter to the predictor.

\begin{proof}[\textbf{Proof of Theorem \ref{Ali_Theorem}}]~\\
Begin by assuming that the predictor merges weakly almost surely. As is argued in \cite{KSYWeakFellerSysCont}, one can view the filter $\pi_{n}^{\mu}$ as a function of $\pi_{n-1}^{\mu}$ (the previous filter) and the current observation $Y_{n}=y_{n}$, that is $\pi^{\mu}_{n}=F(\pi^{\mu}_{n-1},y_{n})$. Pick any continuous and bounded function $f$, we have
\begin{align}
&E^{\mu}[|\int_{\mathcal{X}}f(x)\pi_{n}^{\mu}(dx)-\int_{\mathcal{X}}f(x)\pi_{n}^{\nu}(dx)|] \nonumber \\
=&E^{\mu}[E^{\mu}[| \int_{\mathcal{X}}f(x)F(\pi_{n-1}^{\mu},y_{n})(dx) \nonumber \\
& \quad -\int_{X}f(x)F(\pi_{n-1}^{\nu},y_{n})(dx)  |  |Y_{[0,n-1]}  ] ] \label{ali_eq1}
\end{align}
Now, define the set $I^{+}(y_{[0,n-1]})\subset \mathcal{Y}$ as:
\begin{align*}
I^{+}(y_{[0,n-1]})&= \{y_{n}\in \mathbb{Y} |\int_{\mathcal{X}}f(x)F(\pi_{n-1}^{\mu},y_{n})(dx)  \\
& >\int_{X}f(x)F(\pi_{n-1}^{\nu},y_{n})(dx) \}
\end{align*}
where the argument $y_{[0,n-1]}$ is the sequence on which the previous filters $\pi_{n-1}^{\mu}$ and $\pi_{n-1}^{\nu}$ are realized. Define the complement of this set as $I^{-}(y_{[0,n-1]})$. Then for every fixed realization $y_{[0,n-1]}$ we can break the inner expectation \sy{in (\ref{ali_eq1})} (which is an integral) into two parts and follow the analysis in \cite[Equation 4]{KSYWeakFellerSysCont} together with Theorem 8.6.2 in \cite{Bogachev} to arrive at the conclusion.

\end{proof}


\subsection{Local Observability: Proof of Theorem \ref{local_thm}  and \ref{local_thmRelaxed}}\label{prooflocal_thmRelaxed}

The idea of local observability is the shift some of the burden of approximating the signal $f$. When we work with a function $$\tilde{f}(x)=\int_{\mathcal{Y}^{N'+1}}g(y_{[n,n+N']})P(dy_{[n,n+N']}|X_{n}=x)$$ the result is the terms seen in equation (\ref{approx_eqn}). The first term is dealt with by Blackwell and Dubin's theorem, so we must make sure the second and third term can be made arbitrarily small. For any set $K$ we can write
\begin{align*}
&\left|\int (f-\tilde{f})d\pi^{\nu}_{n-} \right|\leq \sup_{x \in K}|f(x)-\tilde{f}(x)|\pi_{n-}^{\nu}(K) \\
&+\sup_{x \not\in K} |f(x)-\tilde{f}(x)|\pi_{n-}^{\mu}(K^{C})
\end{align*}
in the previous result we bounded this by simply approximating $f$ well over the whole space. Instead, we can choose a $K$ where $\tilde{f}$ approximates $f$ well over $K$ and $\pi_{n-}^{\nu}(K^{C})$ makes the other term arbitrarily small. Furthermore, by taking advantage of the full supremum of total variation we can work with a series of uniformly bounded functions $\tilde{f}_{n}$ and shifting sets $K_{n}$ that change with $n$.

\label{local_proof}
\begin{proof}[\textbf{Proof of Theorem \ref{local_thm}}]
Pick any continuous and bounded function $f$ and any $\epsilon>0$. Fix any sequence of observations $y_{[0,\infty)}$ where the predictors $\pi_{n-}^{\mu}$ and $\pi_{n-}^{\nu}$ are well defined and maintain this sequence for the remainder of the proof. Then consider
\begin{align*}
\lim_{n \to \infty}\left|\int_{\mathcal{X}}f(x)\pi_{n-}^{\mu}(dx)-\int_{\mathcal{X}}f(x)\pi_{n-}^{\nu}(dx) \right|
\end{align*}
For any function  series of functions $\tilde{f}_{n}$ of $x$ we have an upper bound
\begin{align*}
&\lim_{n \to \infty}\left|\int_{\mathcal{X}}\tilde{f}_{n}(x)\pi_{n-}^{\mu}(dx)-\int_{\mathcal{X}}\tilde{f}_{n}(x)\pi_{n-}^{\nu}(dx) \right| \\
&+\left|\int_{\mathcal{X}}(f-\tilde{f}_{n})(x)\pi^{\mu}_{n-}(dx) \right|+\left|\int_{\mathcal{X}}(f-\tilde{f}_{n})(x)\pi^{\nu}_{n-}(dx) \right|
\end{align*}
By assumption of $K$ local predictability, we have a compact sets $K_{n}=K+a_{n}$ where $\pi_{n-}^{\mu}(K_{n})=1$ for every $\mu \ll \nu$ and every $n$.

By $K$ local observability, we can find a uniformly bounded series of functions $g_{n} \leq M$ where
\begin{align*}
\tilde{f}_{n}(x)=\int_{\mathcal{Z}}g_{n}(h(x,z))Q(dz)\\
\sup_{x \in K_{n}} \left|f(x)-\tilde{f}_{n}(x) \right|\leq \frac{\epsilon}{3}
\end{align*}
then for the two approximation terms we have
\begin{align*}
& \left|\int_{\mathcal{X}}(f-\tilde{f}_{n})(x)\pi^{\nu}_{n-}(dx) \right| \\
&\leq \sup_{x \in K_{n}}|f(x)-\tilde{f}_{n}(x)|\pi_{n-}^{\nu}(K_{n})+\sup_{x \not\in K_{n}} |f(x)-\tilde{f}_{n}(x)|\pi_{n-}^{\mu}(K_{n}^{C})\\
&\leq \frac{\epsilon}{3}
\end{align*}we then have
\begin{align*}
&\lim_{n \to \infty}\left|\int_{\mathcal{X}}\tilde{f}_{n}(x)\pi_{n-}^{\mu}(dx)-\int_{\mathcal{X}}\tilde{f}_{n}(x)\pi_{n-}^{\nu}(dx) \right|+\frac{2}{3}\epsilon\\
=&\lim_{n \to \infty} \bigg|\int_{\mathcal{Y}}g_{n}(y_{n})P^{\mu}(dy_{n}|y_{[0,n-1]}) \\
& \qquad -\int_{\mathcal{Y}}g_{n}(y_{n})P^{\nu}(dy_{n}|y_{0,n-1]}) \bigg| +\frac{2}{3}\epsilon
\end{align*} 
we must appeal to the full uniform bound of the Blackwell and Dubins theorem, which was not required in the proof of Theorem \ref{cont_bounded}. The full statement of the Blackwell and Dubins theorem tells us that
\begin{align}
&\lim_{n \to \infty} \sup_{\|g\|\leq 1} \nonumber \\
&\left|\int_{\mathcal{Y}}g(y_{n})P^{\mu}(dy_{n}|y_{[0,n-1]})-\int_{\mathcal{Y}}g(y_{n})P^{\nu}(dy_{n}|y_{0,n-1]}) \right|=0,\label{bd_thm}
\end{align}
where the supremum is taken over measurable functions $g$. Thus, for any fixed measurable and bounded function $g$, we have that
\begin{align*}
\left|\int_{\mathcal{Y}}g(y_{n})P^{\mu}(dy_{n}|y_{[0,n-1]})-\int_{\mathcal{Y}}g(y_{n})P^{\nu}(dy_{n}|y_{0,n-1]}) \right|
\end{align*}
converges to $0$ as $n \to \infty$; this was the form of the statement utilized in the proof of Theorem \ref{cont_bounded}. However, if we have a sequence of measurable functions $g_{n}$ with a uniform bound, $g_{n}\leq M~\forall~n \in \mathbb{N}$, then the supremum in (\ref{bd_thm}) allows us to make a uniform claim about the convergence to zero of the sequence,
\begin{align*}
\left|\int_{\mathcal{Y}}g_{n}(y_{n})P^{\mu}(dy_{n}|y_{[0,n-1]})-\int_{\mathcal{Y}}g_{n}(y_{n})P^{\nu}(dy_{n}|y_{0,n-1]}) \right|,
\end{align*}
and this completes the proof. 
\end{proof}

\begin{proof}[\textbf{Proof of Theorem \ref{local_thmRelaxed}}]\label{local_relaxed_proof}
Fix any $f$ and any $\epsilon$. We begin from the upper bound used previously
\begin{align*}
&\lim_{n \to \infty}\left|\int_{\mathcal{X}}\tilde{f}_{n}(x)\pi_{n-}^{\mu}(dx)-\int_{\mathcal{X}}\tilde{f}_{n}(x)\pi_{n-}^{\nu}(dx) \right| \\
&+\left|\int_{\mathcal{X}}(f-\tilde{f}_{n})(x)\pi^{\mu}_{n-}(dx) \right|+\left|\int_{\mathcal{X}}(f-\tilde{f}_{n})(x)\pi^{\nu}_{n-}(dx) \right|
\end{align*}
for some series of functions $\tilde{f}_{n}$.

By local predictability, the shifted predictors are a tight family. Therefore for any $\epsilon'$ we have a series of compact sets $K_{n}=K'+a_{n}$ such that $\pi_{n-}^{\nu}(K_{n})\geq 1-\epsilon'$ for any $\mu \ll \nu$ and any $n$. 

The proof then proceeds similarly as that of Theorem \ref{local_thm}

\end{proof}

\subsection{Predictor Merging in Total Variation: Proof of Theorem \ref{weak_to_total}}\label{proofweak_to_total}

We now  extend our results from weak merging to total variation. We first state the following supporting results.
\begin{lemma}\label{weakPriorweakFilter}
The (measurement-update) map: 
\begin{align*}
 (\pi_{n_-},y) \mapsto \pi_{n} \quad: \quad \pi_{n}(\cdot):=E_{\pi_{n_-}} [ 1_{X_n \in \cdot } | Y_n=y]
\end{align*}
which maps from $\mathcal{P}(\mathcal{X}) \times \mathbb{Y}$ to $\mathcal{P}(\mathcal{X})$ is weakly continuous in $\pi_{n_-}$ for almost every $y$, provided that $g(x,y)$ is positive, bounded and continuous in $x$ for every fixed $y$.
\end{lemma}

\textbf{Proof.} Consider a continuous and bounded $f$ and let $\pi^m_{n_-} \to \pi_{n_-}$ weakly. Then,
\begin{align}
E_{\pi^m_{n_-}}[f(x_n)|Y_n=y_n] &= \int f(x_{n}) \frac{g(x_{n},y_{n}) \pi^m_{n_-}(dx_n)} {\int_{\mathcal{X}} g(x_{n},y_{n}) \pi^m_{n_-}(dx_n)} \nonumber \\
&=  \frac{ \int f(x_{n}) g(x_{n},y_{n}) \pi^m_{n_-}(dx_n)} {\int_{\mathcal{X}} g(x_{n},y_{n}) \pi^m_{n_-}(dx_n)} \nonumber
\end{align}
Since $g(\cdot,y_n)$ is bounded and continuous, both the numerator and the denominator converge. \qed

\begin{lemma}\label{weakPostTVPredictor}
Let $T(dx_1|x) = t(x_1,x) \phi(dx_1)$ where $t$ is continuous in $x$ for every $x_1$. Then, the (time-update) map: 
\[(\pi_{n}) \mapsto \pi_{{n+1}_-} \quad : \quad  \pi_{{n+1}_-}(\cdot):=\int T(\cdot|x_n) \pi_{n}(dx_n)\] 
which maps from $\mathcal{P}(\mathcal{X})$ to $\mathcal{P}(\mathcal{X})$ is so that
if $\pi^{\nu}_n \to \pi^{\mu,}_n$ weakly then $\pi^{\nu}_{{n+1}_-} \to \pi^{\mu}_{{n+1}_-}$ in total variation.
\end{lemma}

\begin{proof}
We will build on Scheff\'e's Lemma \cite{Bil86}. For every given history, we have
\[\pi^{\nu}_{{n+1}_-}(dx_{n+1}) =  \int T(dx_{n+1}|x_n) \pi^{\nu}_{n}(dx_n)\]
Now, $\int T(dx_{n+1}|x_n)$ is so that,
\[ \int  t(x_{n+1},x_n) \phi(dx_{n+1}) \pi^{m}_n(dx_n) \to \int  t(x_{n+1},x_n) \phi(dx_{n+1}) \pi_n(dx_n) \]
in total variation since for every fixed $z$, the Radon-Nikodym derivative (density) with respect to $\phi$
\[\frac{ \int  t(x_{n+1},x_n) \phi(\cdot) \pi^{m}_n(dx_n)} {d\phi}(z) =   \int  t(z,x_n) \pi^{m}_n(dx_n)\]
satisfies pointwise convergence
\[ \int  t(z,x_n) \pi^{\nu}_n(dx_n) \to \int  t(z,x_n) \pi^{\mu}_n(dx_n) \]
and Scheff\'e's lemma implies that convergence is in total variation. Now, we can apply the result to the sequence $\pi^{\nu}_n$ converging to $\pi^{\mu}_n$.
\end{proof}

\begin{proof}\textbf{Proof of Theorem \ref{weak_to_total}(i)} 


Under Assumption \ref{lebesgue_cont_control2}, the proof follows from Lemma \ref{weakPriorweakFilter} and \ref{weakPostTVPredictor}. While in Lemma \ref{weakPriorweakFilter} and \ref{weakPostTVPredictor} we consider convergence (and not merging), we note that the proof of Lemma \ref{weakPriorweakFilter} also implies weak merging of the posteriors as the priors weakly merge, and by considering the signed measure $\pi^{\nu,\gamma}_n - \pi^{\mu,\gamma}_n$ in the proof of Lemma \ref{weakPostTVPredictor}, total variation merging is a result of a generalized Scheff\'e's lemma \cite[Theorem 2.8.9]{Bogachev}.
\end{proof}


\begin{lemma}\label{dominating measure}
Let $\exists$ some measure $\bar{\mu}$ such that $T(\cdot|x) \ll \bar{\mu}$ for every $x \in \mathcal{X}$. Then we have that $\pi_{n-}^{\mu},\pi_{n-}^{\nu} \ll \bar{\mu}$ for every $n \in  \mathbb{N}$
\end{lemma}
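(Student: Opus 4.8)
The plan is to reduce the claim to the one-step prediction recursion, which expresses the predictor at time $n$ as a mixture of the transition kernels $T(\cdot \mid x)$, and then to observe that any mixture of measures dominated by $\bar\mu$ is itself dominated by $\bar\mu$.

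First I would establish the recursion
\[
\pi_{n-}^{\mu}(\cdot) = \int_{\mathcal{X}} T(\cdot \mid x)\, \pi_{n-1}^{\mu}(dx),
\]
valid $P^{\mu}$ a.s. for every $n \geq 1$. This follows from the Markov structure of the pair process $\{(X_n,Y_n)\}$: since $\pi_{n-}^{\mu}(A) = E^{\mu}[1_{\{X_n \in A\}} \mid Y_{[0,n-1]}]$, the tower property with respect to the larger sigma field generated by $(X_{n-1}, Y_{[0,n-1]})$ gives
\[
\pi_{n-}^{\mu}(A) = E^{\mu}\big[\, E^{\mu}[1_{\{X_n \in A\}} \mid X_{n-1}, Y_{[0,n-1]}] \,\big|\, Y_{[0,n-1]} \,\big].
\]
Because $Y_{[0,n-1]}$ is a measurable function of $(X_{[0,n-1]}, Z_{[0,n-1]})$ and, conditioned on $X_{n-1}$, the next state $X_n \sim T(\cdot \mid X_{n-1})$ is independent of this history, the inner conditional expectation collapses to $T(A \mid X_{n-1})$. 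Integrating the outer conditional expectation against the filter $\pi_{n-1}^{\mu} = P^{\mu}(X_{n-1} \in \cdot \mid Y_{[0,n-1]})$ yields the recursion.

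With the recursion in hand, the domination is immediate. I would fix an arbitrary set $A \in \mathcal{B}(\mathcal{X})$ with $\bar\mu(A)=0$. By hypothesis $T(\cdot \mid x) \ll \bar\mu$ for every $x$, so $T(A \mid x) = 0$ for all $x \in \mathcal{X}$, and hence
\[
\pi_{n-}^{\mu}(A) = \int_{\mathcal{X}} T(A \mid x)\, \pi_{n-1}^{\mu}(dx) = 0.
\]
Since $A$ was an arbitrary $\bar\mu$-null set, this is precisely $\pi_{n-}^{\mu} \ll \bar\mu$, and the identical argument applied with the prior $\nu$ gives $\pi_{n-}^{\nu} \ll \bar\mu$. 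The point worth emphasizing is that the conclusion holds no matter how singular the preceding filter $\pi_{n-1}^{\mu}$ (or $\pi_{n-1}^{\nu}$) may be --- in the non-dominated examples of interest the filters carry point masses --- because a single application of the transition kernel already regularizes the measure.

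I do not expect a genuine obstacle here; the only point requiring care is the rigorous justification of the prediction recursion as an identity between regular conditional probabilities holding $P^{\mu}$ almost surely, in particular the use of the Markov property to drop the dependence on $Y_{[0,n-1]}$ in the inner conditional expectation. Everything else is a routine application of absolute continuity to a mixture of kernels.
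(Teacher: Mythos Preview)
Your proposal is correct and follows essentially the same route as the paper: both arguments rest on the one-step prediction recursion $\pi_{n-}^{\mu}(A)=\int_{\mathcal{X}}T(A\mid x)\,\pi_{n-1}^{\mu}(dx)$ and then observe that a mixture of measures dominated by $\bar\mu$ is dominated by $\bar\mu$. The only difference is cosmetic: where you verify absolute continuity by checking $\bar\mu$-null sets directly, the paper instead applies Fubini to exhibit the Radon--Nikodym derivative explicitly as $a\mapsto\int_{\mathcal{X}}\frac{dT(\cdot\mid x)}{d\bar\mu}(a)\,\pi_{n-1}^{\mu}(dx)$. That explicit density is what gets used in the subsequent equicontinuity lemma, so you may want to record it as well, but for the lemma as stated your null-set argument is sufficient and slightly more elementary.
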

\begin{proof}
For all $n\geq 1$ we have
\begin{align*}
\pi_{n-}^{\mu}(A)&=\int_{\mathcal{X}}T(A|x)\pi_{n-1}^{\mu}(dx)
=\int_{\mathcal{X}}\int_{A}\frac{dT(\cdot|x)}{d\bar{\mu}}(a)\bar{\mu}(da)\pi_{n-1}^{\mu}(dx)\\
&=\int_{A}\left(\int_{\mathcal{X}}\frac{dT(\cdot|x)}{d\bar{\mu}}(a)\pi_{n-1}^{\mu}(dx)\right)\bar{\mu}(da)
\end{align*}
where we have applied Fubini's theorem in the final equality. Therefore $\pi_{n-}^{\mu}$ is absolutely continuous with respect to $\bar{\mu}$ for every $n\geq 1$.
\end{proof}
 
\begin{lemma}\label{equicontinuity} 
Let Assumption \ref{lebesgue_cont} hold and let $f^{\mu}_{n-}$ denote the density function of $\pi_{n-}^{\mu}$. Fix any sequence of measurements $y_{[0,\infty)}$ and denote the collection of probability density functions $\mathscr{F}^{\mu}=\{f_{n-}^{\mu}|n\in \mathbb{N}\},\mathscr{F}^{\nu}=\{f_{n-}^{\nu}|n\in \mathbb{N}\}$. Then $\mathscr{F}^{\mu}, \mathscr{F}^{\nu}$ are uniformly bounded equicontinuous families.
\end{lemma}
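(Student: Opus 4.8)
The plan is to write the predictor density explicitly via the one-step recursion and then transfer, directly and uniformly in $n$, the two properties of the transition density family supplied by Assumption \ref{lebesgue_cont}. The key observation is that every integration below is against a probability measure, so the bounds inherited from $\{t(\cdot|x)\}_{x\in\mathcal{X}}$ do not degrade with $n$.

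First I would invoke Lemma \ref{dominating measure} with the dominating measure taken to be $\lambda$: since $T(\cdot|x)\ll\lambda$ for every $x$, each predictor $\pi_{n-}^{\mu}$ (for $n\ge 1$) is absolutely continuous with respect to $\lambda$, and the Fubini computation in that lemma identifies its density, along the fixed measurement sequence $y_{[0,\infty)}$, as
\[
f_{n-}^{\mu}(a)=\int_{\mathcal{X}} t(a|x)\,\pi_{n-1}^{\mu}(dx),
\]
where $\pi_{n-1}^{\mu}$ is the filter at the previous time step. Everything else is an elementary estimate on this integral. For uniform boundedness, let $M:=\sup_{x,a} t(a|x)<\infty$ be the uniform bound on the transition densities from Assumption \ref{lebesgue_cont}. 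Since $\pi_{n-1}^{\mu}$ is a probability measure, $f_{n-}^{\mu}(a)\le M\,\pi_{n-1}^{\mu}(\mathcal{X})=M$ for every $a$ and every $n$, so $\mathscr{F}^{\mu}$ is uniformly bounded by $M$, and the identical argument applies to $\mathscr{F}^{\nu}$.

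For equicontinuity, I would fix $a'\in\mathcal{X}$ and $\epsilon>0$. The equicontinuity hypothesis on the transition densities furnishes a single $\delta>0$ such that $d(a,a')<\delta$ implies $|t(a|x)-t(a'|x)|<\epsilon$ for every $x\in\mathcal{X}$. Integrating this bound against the probability measure $\pi_{n-1}^{\mu}$ then yields
\[
\big|f_{n-}^{\mu}(a)-f_{n-}^{\mu}(a')\big|\le\int_{\mathcal{X}}\big|t(a|x)-t(a'|x)\big|\,\pi_{n-1}^{\mu}(dx)<\epsilon
\]
whenever $d(a,a')<\delta$, for every $n$, and the same estimate gives equicontinuity of $\mathscr{F}^{\nu}$.

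I do not expect a genuine obstacle: the substance is entirely contained in Assumption \ref{lebesgue_cont}, and the proof is just a transfer of bounds through the predictor recursion. The only point that deserves care is verifying that the uniformity in $n$ is real — and this is immediate, since the filters $\pi_{n-1}^{\mu}$ vary with $n$ but always have total mass one, so the constant $M$ and the modulus $\delta$, which come from the transition family and not from the filter, are independent of $n$. This is exactly what upgrades mere continuity of each individual $f_{n-}^{\mu}$ to equicontinuity of the family $\mathscr{F}^{\mu}$, and it is this equicontinuity (together with uniform boundedness) that will feed into an Arzel\`a--Ascoli argument in the proof of Theorem \ref{weak_to_total}.
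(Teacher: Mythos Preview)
Your proof is correct and follows essentially the same approach as the paper: both write the predictor density via the recursion $f_{n-}^{\mu}(a)=\int t(a|x)\,\pi_{n-1}^{\mu}(dx)$ from Lemma~\ref{dominating measure}, read off uniform boundedness from the uniform bound on $t$, and obtain equicontinuity by pushing the equicontinuity of $\{t(\cdot|x)\}_{x}$ through the integral against the probability measure $\pi_{n-1}^{\mu}$. Your explicit remark that the uniformity in $n$ comes from $\pi_{n-1}^{\mu}$ having total mass one is exactly the point, and matches the paper's reasoning.
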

\begin{proof}
As we see from Lemma \ref{dominating measure},
\begin{align*}
f_{n-}^{\mu}(x_{n})=\frac{d \pi_{n-}^{\mu}}{d\phi}(x_{n})=\int_{\mathcal{X}}t(x_{n}|x_{n-1})\pi_{n-1}^{\mu}(dx_{n-1})
\end{align*}
Where $t(\cdot|x)$ is the Radon Nikodym derivative of $T(\cdot|x)$ with respect to our dominating measure $\phi$ and $d(\cdot,\cdot)$ will represent the metric on $\mathcal{X}$ (recall $\mathcal{X}$ is a complete, separable, metric space). We require $\forall \epsilon>0,~x^{*}\in \mathcal{X}~\exists~\delta>0$ such that $\forall~d(x,x^{*})<\delta$, $\forall n \in \mathbb{N}$ we have $|f_{n-}^{\mu}(x)-f_{n-}^{\mu}(x^{*})|<\epsilon$. By Assumption \ref{lebesgue_cont}, clearly $f_{n-}^{\mu}$ is uniformly bounded since $t$ is uniformly bounded. Then, for any $\epsilon>0$, $\forall x^{*} \in \mathcal{X}$ we can find a $\delta>0$ such that $\forall x_1\in \mathcal{X}$, $|t(x_{2}|x_{1})-t(x^{*}|x_{1})|<\epsilon$ when $d(x_{2},x^{*})<\delta$ . Now, assume $d(x_{2},x^{*})<\delta$, we have

\begin{align*}
|f_{n-}^{\mu}(x_{2})-f_{n-}^{\mu}(x^{*})|&=\left|\int_{\mathcal{X}}t(x_{2}|x_{1})-t(x^{*}|x_{1})d\pi_{n-}^{\mu}(dx_{1})\right|\\
&\leq \int_{\mathcal{X}}|t(x_{2}|x_{1})-t(x^{*}|x_{1})|d\pi_{n-}^{\mu}(x_{1})
\leq \epsilon
\end{align*}

which proves that $\mathscr{F}^{\mu}$ and $\mathscr{F}^{\nu}$ are uniformly bounded and equicontinuous families.
\end{proof}
\begin{proof}[\textbf{Proof of Theorem \ref{weak_to_total}}(ii)]
By assumption we have weak stability of the predictor $P^{\mu}$ a.s.. Then there exists a set of measure sequences $B\subset \mathcal{Y}^{\mathbb{Z}_{+}}$ with $P^{\mu}(B)=1$. For each measurement sequence $y_{[0,\infty]} \in B$, we have that the predictor realizations $\pi_{n-}^{\mu}$ and $\pi_{n-}^{\nu}$ merge in the weak sense. We will choose a general measurement sequence $y_{[0,\infty]} \in B$ and fix this sequence for the remainder of the proof.
Via Lemma \ref{dominating measure}, and \ref{equicontinuity}, $\mathscr{F}^{\mu}$ and $\mathscr{F}^{\nu}$ are uniformly bounded and equicontinuous families. Let $\mathscr{F}^{\mu-\nu}=\{f_{n}|f_{n}=f_{n-}^{\mu}-f_{n-}^{\nu}]\}$, then the sequence $\{f_{n}\}_{n=1}^{\infty}$ is a uniformly bounded and equicontinuous class of integrable functions. As in the proof of \cite[Lemma 2]{linder2014optimal}, now pick a sequence of compact sets $K_{j}\subset \mathcal{X}$ such that $K_{j} \subset K_{j+1}$. By the Arzela-Ascoli theorem \cite{rudin2006real}, for any subsequence we can find further subsequences $f_{n_{k}^{j}}$ such that 
\begin{align*}
\lim_{k \to \infty} \sup_{x \in K_{j}}|f_{n_{k}^{j}}(x)-f^{j}(x)|=0
\end{align*}
for some continuous function $f^{j}:K_{j} \to [0,\infty)$. Via the $K_{j}$ being nested, we can have $\{f_{n_{k}^{j+1}}\}$ be a subsequence of $\{f_{n_{k}^{j}}\}$, and therefore $f^{j+1}=f^{j}$ over $K_{j}$. Then define the function $\tilde{f}$ on $\mathcal{X}$ by $\tilde{f}(x)=f^{j}(x),x \in K_{j}$. Using Cantor's diagonal method, we can find an increasing sequence of integers $\{m_{i}\}$ which is a subsequence of $\{n_{k}^{j}\}$ for every $j$. Therefore
\begin{align*}
\lim_{i \to \infty} f_{m_{i}}(x)=\tilde{f}(x)~~\forall x \in \mathcal{X}
\end{align*}
and the convergence is uniform over each $K_{j}$ and $\tilde{f}$ is continuous. Now, $f_{m_{i}}$ converges weakly to the zero measure by assumption, and via uniform convergence for any Borel set $\mathcal{B}$ we have \[\int_{\mathcal{B}}f_{m_{i}}(x)dx \to \int_{\mathcal{B}}\tilde{f}(x)dx,\] i.e. setwise convergence. 
Yet this implies weak convergence, so $\tilde{f}=0$ almost everywhere, yet $\tilde{f}$ is continuous so it is 0 everywhere. Now, via Prokhorov theorem (Theorem 8.6.2 in \cite{Bogachev}) we have that $\mathscr{F}^{\mu-\nu}$ is a tight family. Therefore, for every $\epsilon>0$ we can find a compact set $K_{\epsilon}$ such that 
\[|\pi_{n-}^{\mu}-\pi_{n-}^{\nu}|(\mathcal{X}\setminus K_{\epsilon})<\epsilon~\forall~n \in \mathbb{N}.\]
 then we have
\begin{align*}
&\lim_{i \to \infty} \|\pi_{m_{i}-}^{\mu}-\pi_{m_{i}-}^{\nu}\|_{TV}\leq \lim_{i \to \infty}|\pi_{m_{i}-}^{\mu}-\pi_{m_{i}-}^{\nu}|(\mathcal{X}\setminus K_{\epsilon}) \\
& \quad \quad +|\pi_{m_{i}-}^{\mu}-\pi_{m_{i}-}^{\nu}|( K_{\epsilon})\\
&\leq \lim_{i \to \infty} \sup_{\|g\|_{\infty}\leq 1}\left|\int_{K_{\epsilon}}g(x)f_{m_{i}}(x)dx \right|+\epsilon\\
&\leq \lim_{i \to \infty}\sup_{\|g\|_{\infty}\leq 1}\left| \int_{K_{\epsilon}}g(x)(\tilde{f}-f_{m_{i}})(x)dx\right|+\left|\int_{K_{\epsilon}}g(x)\tilde{f}(x)dx \right| +\epsilon\\
&\leq \lim_{i \to \infty} \|\tilde{f}-f_{m_{i}}\|_{\infty}\phi(K_{\epsilon})+\epsilon
\end{align*}
since we have already argued $\tilde{f}=0$. Now, over the compact set $K_{\epsilon}$, $f_{m_{i}}$ converges to $\tilde{f}$ uniformly, therefore $\exists N$ such that $\forall k>N$, $\|\tilde{f}-f_{n_{k}}\|_{\infty}<\frac{\epsilon}{\phi({K}_{\epsilon})}$. We then conclude that
\begin{align*}
\lim_{i \to \infty} \|\pi_{m_{i}-}^{\mu}-\pi_{m_{i}-}^{\nu}\|_{TV}=0
\end{align*}
Thus, for every subsequence of $\{f_{n}\}_{n=1}^{\infty}$, we can find a subsequence that converges in total variation, which implies that the original sequence converges in total variation.
\end{proof}


\subsection{Filter Merging in Total Variation: Proof of Theorem \ref{tv_equal_placeholder}}\label{prooftv_equal_placeholder}


For completeness, in Section \ref{appenSupTV} some supporting results are presented. 

\begin{proof}[\textbf{Proof of Theorem \ref{tv_equal_placeholder}}]
The sigma fields $\mathcal{F}^{\mathcal{X}}_{n,\infty} \vee \mathcal{F}^{\mathcal{Y}}_{0,\infty}$ are a decreasing sequence, that is $\mathcal{F}^{\mathcal{X}}_{n+1,\infty} \vee \mathcal{F}^{\mathcal{Y}}_{0,\infty}~\subset~\mathcal{F}^{\mathcal{X}}_{n,\infty} \vee \mathcal{F}^{\mathcal{Y}}_{0,\infty}$. Therefore, when we take their intersection, removing the first or largest sigma field $\mathcal{F}^{\mathcal{X}}_{0,\infty} \vee \mathcal{F}^{\mathcal{Y}}_{0,\infty}$ from the intersection of a deceasing set of sigma fields does not change the overall intersection. From Lemma \ref{filt_iff} and \ref{pred_sigma_field_condition}, it is clear that the two conditions for merging in total variation in expectation are equivalent since the sigma fields on the LHS  of Equation (\ref{sigma_field_condition}) and (\ref{pred_sigma_eqn}) are equal.
\end{proof}

We have now established that the filter merges in total variation in expectation, but we would like to extend this result to almost surely. By a simple application of Fatou's lemma, we can argue the liminf of the total variation of the filter is zero $P^{\mu}$ a.s.. Hence if the limit exists, it must be zero, yet it is not immediate that the limit will exist. This leads to the following. 

\begin{theorem}\cite[p. 572]{van2008discrete} \label{almost_sure_stability}
Assume the filter is stable in total variation in expectation. Then the filter is stable in total variation $P^{\mu}$ a.s.
\end{theorem}


\subsection{Relative Entropy Merging: Proof of Theorem \ref{re_tv_equal}}\label{proofre_tv_equal}

We will now show that the relative entropy merging of the filter is essentially equivalent to merging in total variation in expectation.  Via Lemma \ref{filter_rd_derivative} and \ref{predictor_rd_derivative}, it is clear that the filter and predictor admit Radon-Nikodym derivatives. Therefore, working with $D(\pi_{n}^{\mu}\| \pi_{n}^{\nu})$ and $D(\pi_{n-}^{\mu}\|\pi_{n-}^{\nu})$ is well defined.
A well known result for relative entropy is the chain rule \cite[Theorem 5.3.1]{gray2011entropy}:
\begin{lemma}\label{chain rule}
For joint measures $P,Q$ on random variables $X,Y$ we have
\begin{align*}
D(P(X,Y)\|Q(X,Y))=D(P(X)\|Q(X))+D(P(Y|X)\|Q(Y|X))
\end{align*}
\end{lemma}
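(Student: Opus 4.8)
The plan is to reduce the identity to a factorization of Radon--Nikodym derivatives and then integrate against the joint law $P$. Throughout I assume $P \ll Q$ on the joint space, which is exactly the hypothesis under which $D(P(X,Y)\|Q(X,Y))$ is defined; working on Polish spaces guarantees regular conditional probabilities $P(dy|x)$ and $Q(dy|x)$, so the disintegrations $P(d(x,y)) = P(dy|x)\,P_X(dx)$ and $Q(d(x,y)) = Q(dy|x)\,Q_X(dx)$ are available. A preliminary observation is that joint absolute continuity forces $P_X \ll Q_X$ (if $Q_X(B)=0$ then $Q(B\times\mathcal{Y})=0$, hence $P(B\times\mathcal{Y})=0$), and by disintegration $P(\cdot|x) \ll Q(\cdot|x)$ for $P_X$-almost every $x$, so every Radon--Nikodym derivative appearing below is well defined.

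First I would establish the key factorization
\[
\frac{dP}{dQ}(x,y) = \frac{dP_X}{dQ_X}(x)\,\frac{dP_{Y|X}}{dQ_{Y|X}}(x,y) \qquad Q\text{-a.s.},
\]
by verifying that the right-hand side integrates correctly against $Q$. For an arbitrary measurable set $A$ in the joint sigma-field, I would integrate the product against $Q$, dispatch the inner $y$-integral using that $\frac{dP_{Y|X}}{dQ_{Y|X}}(x,\cdot)$ is the density of $P(\cdot|x)$ relative to $Q(\cdot|x)$ (so $Q(dy|x)$ becomes $P(dy|x)$), and then collapse the outer $x$-integral using that $\frac{dP_X}{dQ_X}$ is the density of $P_X$ relative to $Q_X$ (so $Q_X(dx)$ becomes $P_X(dx)$). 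Reassembling via the disintegration of $P$ returns exactly $P(A)$, which identifies the product as $\frac{dP}{dQ}$ by uniqueness of the Radon--Nikodym derivative.

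With the factorization in hand, taking logarithms yields the pointwise ($P$-a.s.) identity
\[
\log\frac{dP}{dQ}(x,y) = \log\frac{dP_X}{dQ_X}(x) + \log\frac{dP_{Y|X}}{dQ_{Y|X}}(x,y).
\]
Integrating both sides against $P(d(x,y))$ and splitting by linearity, the first term reduces to $\int \log\frac{dP_X}{dQ_X}\,dP_X = D(P(X)\|Q(X))$ because its integrand depends on $x$ alone, so the $y$-marginal of $P$ integrates out, while the second term is exactly $D(P(Y|X)\|Q(Y|X))$ by the definition of conditional relative entropy in~(\ref{re_cond_def}). To justify the term-by-term split without an $\infty-\infty$ ambiguity, I would note that each of these relative entropies is a well-defined element of $[0,\infty]$: the negative part of the integrand $\frac{dP}{dQ}\log\frac{dP}{dQ}$ is $Q$-integrable since $t\log t \ge -1/e$, and likewise for the marginal and conditional versions, so the decomposition holds as an identity in $[0,\infty]$.

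The main obstacle is the factorization of the Radon--Nikodym derivative: it is the only step requiring genuine measure-theoretic care, resting on the disintegration of both $P$ and $Q$ and on the $P_X$-almost-everywhere conditional absolute continuity. Once that identity is secured, the remainder is bookkeeping: taking logarithms, invoking linearity of the integral, and matching each piece to the relevant definition.
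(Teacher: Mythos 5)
Your proof is correct. Note that the paper itself does not prove this lemma: it is quoted as a known result, citing Theorem 5.3.1 of Gray's \emph{Entropy and Information Theory}, so there is no in-paper argument to compare against; your proposal supplies the standard self-contained proof that the citation stands in for. The structure --- factorize $\frac{dP}{dQ}(x,y)=\frac{dP_X}{dQ_X}(x)\,\frac{dP_{Y|X}}{dQ_{Y|X}}(x,y)$, take logarithms, integrate against $P$, and control negative parts via $t\log t\ge -1/e$ so that the term-by-term split is legitimate as an identity in $[0,\infty]$ --- is exactly the classical route, and your handling of the potential $\infty-\infty$ ambiguity is right. The one step you should tighten is the opening claim that joint absolute continuity gives $P(\cdot|x)\ll Q(\cdot|x)$ for $P_X$-a.e.\ $x$ ``by disintegration'': as written this is an assertion rather than a proof, and it is not immediate (a naive attempt to exhibit, for each bad $x$, a witnessing null set $N_x$ runs into a measurable-selection problem). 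The clean way to close it is to set $f=\frac{dP}{dQ}$, define $h(x)=\int f(x,y)\,Q(dy|x)$, and verify two things: that $h\,Q_X$ is the $X$-marginal of $P$, and that the kernel $\frac{f(x,y)}{h(x)}Q(dy|x)$ paired with $P_X(dx)$ disintegrates $P$. By $P_X$-a.e.\ uniqueness of disintegrations on Polish spaces this identifies $P(dy|x)=\frac{f(x,y)}{h(x)}Q(dy|x)$ for $P_X$-a.e.\ $x$, which delivers the conditional absolute continuity, a jointly measurable version of the conditional density, \emph{and} the factorization $f(x,y)=h(x)\cdot\frac{f(x,y)}{h(x)}$ all in one stroke --- your separate verification-by-integration then becomes a corollary of the same computation rather than an independent step resting on an unproved premise.
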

Note for two sigma fields $\mathcal{F}$ and $\mathcal{G}$ and two joint measures $P$ and $Q$ on $\mathcal{F} \vee \mathcal{G}$ one could also express this relationship as
\begin{align}\label{chainRInfo}
D(P|_{\mathcal{F} \vee \mathcal{G}}\|Q|_{\mathcal{F} \vee \mathcal{G}})&=D(P|_{\mathcal{F}}\|Q|_{\mathcal{F}})+D(P|_{\mathcal{G}}|\mathcal{F}\|Q|_{\mathcal{G}}|\mathcal{F})
\end{align}

\begin{proof}[\textbf{Proof of Theorem \ref{re_tv_equal}}]
First assume the filter is stable in relative entropy. Since the square root function is continuous and convex, we have
\begin{align*}
&0=\lim_{n \to \infty} \sqrt{\frac{2}{\log(e)}E^{\mu}[D(\pi^{\mu}_{n}\|\pi_{n}^{\nu})]} \\
&\quad \geq \lim_{n \to \infty} E^{\mu}\left[\sqrt{\frac {2}{\log(e)}D(\pi^{\mu}_{n}\|\pi_{n}^{\nu})} \right]
\end{align*}
where we have applied Jensen's inequality. We then apply Pinsker's inequality and we have $\lim_{n \to \infty}E^{\mu}[\|\pi_{n}^{\mu}-\pi_{n}^{\nu}\|_{TV}]=0$.

For the converse direction, by chain rule (\ref{chainRInfo}), it is clear that
\begin{align*}
&E^{\mu}[D(\pi_{n}^{\mu}\|\pi_{n}^{\nu})]=D(P^{\mu}|_{\mathcal{F}^{\mathcal{X}}_{n}}|\mathcal{F}^{\mathcal{Y}}_{0,n}\|P^{\nu}|_{\mathcal{F}^{\mathcal{X}}_{n}}|\mathcal{F}^{\mathcal{Y}}_{0,n})\\
&\quad=D(P^{\mu}|_{\mathcal{F}^{\mathcal{X}}_{n} \vee \mathcal{F}^{\mathcal{Y}}_{0,n}}\|P^{\nu}|_{\mathcal{F}^{\mathcal{X}}_{n} \vee \mathcal{F}^{\mathcal{Y}}_{0,n}})-D(P^{\mu}|_{\mathcal{F}^{\mathcal{Y}}_{0,n}}\|P^{\nu}|_{\mathcal{F}^{\mathcal{Y}}_{0,n}})
\end{align*}
by the Markov Property we have $X_{[0,n-1]},Y_{[0,n-1]}$ and $X_{[n+1,\infty)},Y_{[n+1,\infty)}$ are conditionally independent given $X_{n},Y_{n}$ therefore we have:
\begin{align*}
& D(P^{\mu}|_{\mathcal{F}^{\mathcal{X}}_{n} \vee \mathcal{F}^{\mathcal{Y}}_{0,n}}\|P^{\nu}|_{\mathcal{F}^{\mathcal{X}}_{n} \vee \mathcal{F}^{\mathcal{Y}}_{0,n}}) \\
&\quad =D(P^{\mu}|_{\mathcal{F}^{\mathcal{X}}_{n,\infty} \vee \mathcal{F}^{\mathcal{Y}}_{0,\infty}}\|P^{\nu}|_{\mathcal{F}^{\mathcal{X}}_{n,\infty} \vee \mathcal{F}^{\mathcal{Y}}_{0,\infty}})
\end{align*}
Then $\mathcal{F}^{\mathcal{X}}_{n,\infty} \vee \mathcal{F}^{\mathcal{Y}}_{0,\infty}$ is a decreasing sequence of sigma fields. By \cite[Theorem 2]{BarronSorrento} we have that if the relative entropy is ever finite, the limit of the relative entropy  restricted to these sigma fields is the relative entropy restricted to the intersection of the decreasing fields, that is
\begin{align*}
&\lim_{n \to \infty}D(P^{\mu}|_{\mathcal{F}^{\mathcal{X}}_{n,\infty} \vee \mathcal{F}^{\mathcal{Y}}_{0,\infty}}\|P^{\nu}|_{\mathcal{F}^{\mathcal{X}}_{n,\infty} \vee \mathcal{F}^{\mathcal{Y}}_{0,\infty}}) \nonumber \\
&=D(P^{\mu}|_{\bigcap_{n \geq 0}\mathcal{F}^{\mathcal{X}}_{n,\infty} \vee \mathcal{F}^{\mathcal{Y}}_{0,\infty}}\|P^{\nu}|_{\bigcap_{n \geq 0}\mathcal{F}^{\mathcal{X}}_{n,\infty} \vee \mathcal{F}^{\mathcal{Y}}_{0,\infty}})
\end{align*}
Likewise, $\mathcal{F}^{\mathcal{Y}}_{0,n}$ is an increasing sequence of sigma fields, therefore by \cite[Theorem 3]{BarronSorrento} we have that if the relative entropy is ever finite, the relative entropy restricted to these sigma fields is the relative entropy over the limit field, that is
\begin{align*}
& \lim_{n \to \infty}D(P^{\mu}|_{\mathcal{F}^{\mathcal{Y}}_{0,n}}\|P^{\nu}|_{\mathcal{F}^{\mathcal{Y}}_{0,n}}) =D(P^{\mu}|_{\mathcal{F}^{\mathcal{Y}}_{0,\infty}}\|P^{\nu}|_{\mathcal{F}^{\mathcal{Y}}_{0,\infty}})
\end{align*}
Therefore, 
\begin{align*}
& \lim_{n \to \infty} E^{\mu}[D(\pi_{n}^{\mu}\|\pi_{n}^{\nu})] \\
&=D(P^{\mu}|_{\bigcap_{n \geq 0}\mathcal{F}^{\mathcal{X}}_{n,\infty} \vee \mathcal{F}^{\mathcal{Y}}_{0,\infty}}\|P^{\nu}|_{\bigcap_{n \geq 0}\mathcal{F}^{\mathcal{X}}_{n,\infty} \vee \mathcal{F}^{\mathcal{Y}}_{0,\infty}}) \\
& -D(P^{\mu}|_{\mathcal{F}^{\mathcal{Y}}_{0,\infty}}\|P^{\nu}|_{\mathcal{F}^{\mathcal{Y}}_{0,\infty}})
\end{align*}
By Lemma \ref{restrict_structure} we have
\begin{align*}
\frac{dP^{\mu}|_{\bigcap_{n \geq 0}\mathcal{F}^{\mathcal{X}}_{n,\infty} \vee \mathcal{F}^{\mathcal{Y}}_{0,\infty}}}{dP^{\nu}|_{\bigcap_{n \geq 0}\mathcal{F}^{\mathcal{X}}_{n,\infty} \vee \mathcal{F}^{\mathcal{Y}}_{0,\infty}}}&=E^{\nu}\left[\left.\frac{d\mu}{d\nu}(X_{0})\right|\bigcap_{n \geq 0}\mathcal{F}^{\mathcal{X}}_{n,\infty} \vee \mathcal{F}^{\mathcal{Y}}_{0,\infty}\right]=f_{1}\\
\frac{dP^{\mu}|_{\mathcal{F}^{\mathcal{Y}}_{0,\infty}}}{d P^{\nu}|_{\mathcal{F}^{\mathcal{Y}}_{0,\infty}}}&=E^{\nu}\left[\left.\frac{d\mu}{d\nu}(X_{0})\right|\mathcal{F}^{\mathcal{Y}}_{0,\infty}\right]=f_{2}
\end{align*}
Note that $f_{1}$ is $\bigcap_{n \geq 0}\mathcal{F}^{\mathcal{X}}_{n,\infty} \vee \mathcal{F}^{\mathcal{Y}}_{0,\infty}$ measurable, while $f_{2}$ is $ \mathcal{F}^{\mathcal{Y}}_{0,\infty}$ measurable, and $\mathcal{F}^{\mathcal{Y}}_{0,\infty}\subset \bigcap_{n \geq 0}\mathcal{F}^{\mathcal{X}}_{n,\infty} \vee \mathcal{F}^{\mathcal{Y}}_{0,\infty}$. By Lemma \ref{filt_iff}, we have that if the filter merges in total variation in expectation, then for a set of state and observation sequences $\omega=(x_{i},y_{i})_{i=0}^{\infty} \in A \subset \mathcal{F}^{\mathcal{X}}_{0,\infty} \vee \mathcal{F}^{\mathcal{Y}}_{0,\infty}$ with $P^{\nu}(A)=1$, we have $f_{1}(\omega)=f_{2}(\omega)$. Yet this then means over the set $A$ of $P^{\nu}$ measure $1$, $f_{1}=f_{2}$ is $\mathcal{F}^{\mathcal{Y}}_{0,\infty}$ measurable. We then have
\begin{align*}
&D(P^{\mu}|_{\bigcap_{n \geq 0}\mathcal{F}^{\mathcal{X}}_{n,\infty} \vee \mathcal{F}^{\mathcal{Y}}_{0,\infty}}\|P^{\nu}|_{\bigcap_{n \geq 0}\mathcal{F}^{\mathcal{X}}_{n,\infty} \vee \mathcal{F}^{\mathcal{Y}}_{0,\infty}}) \\
& -D(P^{\mu}|_{\mathcal{F}^{\mathcal{Y}}_{0,\infty}}\|P^{\nu}|_{\mathcal{F}^{\mathcal{Y}}_{0,\infty}})\\
&=E^{\mu}[\log(f_{1})]-E^{\mu}[\log(f_{2})]=E^{\nu}[f_{1}\log(f_{1})]-E^{\nu}[f_{2}\log(f_{2})]\\
&=\int_{\Omega} f_{1}(\omega)\log(f_{1}(\omega))dP^{\nu}|_{\bigcap_{n \geq 0}\mathcal{F}^{\mathcal{X}}_{n,\infty} \vee \mathcal{F}^{\mathcal{Y}}_{0,\infty}}(\omega) \\
& -\int_{\Omega}f_{2}(\omega)\log(f_{2}(\omega))dP^{\nu}|_{\mathcal{F}^{\mathcal{Y}}_{0,\infty}}(\omega)\\
&=\int_{A} f_{1}(\omega)\log(f_{1}(\omega))dP^{\nu}|_{\bigcap_{n \geq 0}\mathcal{F}^{\mathcal{X}}_{n,\infty} \vee \mathcal{F}^{\mathcal{Y}}_{0,\infty}}(\omega) \\
&-\int_{A}f_{2}(\omega)\log(f_{2}(\omega))dP^{\nu}|_{\mathcal{F}^{\mathcal{Y}}_{0,\infty}}(\omega)\\
&=\int_{A} f_{1}(\omega)\log(f_{1}(\omega))dP^{\nu}|_{\mathcal{F}^{\mathcal{Y}}_{0,\infty}}(\omega) \\
& -\int_{A}f_{2}(\omega)\log(f_{2}(\omega))dP^{\nu}|_{\mathcal{F}^{\mathcal{Y}}_{0,\infty}}(\omega) =0
\end{align*}
 Therefore, if the relative entropy of the filter is ever finite, then total variation merging in expectation is equivalent to merging in relative entropy.
\end{proof}

\section{Conclusion}\label{conclusions}
We presented a notion of stochastic observability for non-linear systems. This notion is explicit, is relatively easily computed due to its functional approximation formulation, and is shown via examples to be applicable to a large class of systems. The implications of this definition for filter stability were presented in detail. Further relations under various stability criteria and implications were studied.

 \section{Appendix. Supporting Results for Section \ref{prooftv_equal_placeholder}}\label{appenSupTV}

We present a number of supporting results. The approach for these build on similar arguments in \cite{chigansky2009intrinsic} and \cite{van2009stability}. The proofs here are kept brief due to space constraints or omitted.

\begin{lemma}\label{restrict_structure}
Assume $\mu \ll \nu$. For any sigma field $\mathcal{G} \subseteq \mathcal{F}^{\mathcal{X}}_{0,\infty} \vee \mathcal{F}^{\mathcal{Y}}_{0,\infty}$ we have:
\begin{align*}
\frac{dP^{\mu}|_{\mathcal{G}}}{dP^{\nu}|_{\mathcal{G}}}&=E^{\nu}\left[\left.\frac{d\mu}{d\nu}(X_{0})\right|\mathcal{G}\right]~~~P^{\mu}~a.s.
\end{align*}
\end{lemma}

\begin{lemma}\label{condition_structure}
Assume $\mu \ll \nu$. For any two sigma fields $\mathcal{G}_{1},\mathcal{G}_{2} \subset \mathcal{F}^{\mathcal{X}}_{0,\infty} \vee \mathcal{F}^{\mathcal{Y}}_{0,\infty}$, let $P^{\mu}|_{\mathcal{G}_{1}}|\mathcal{G}_{2}$ represent the probability measure $P^{\mu}$ restricted to $\mathcal{G}_{1}$, conditioned on field $\mathcal{G}_{2}$. We then have
\begin{align*}
&\frac{dP^{\mu}|_{\mathcal{G}_{1}}|\mathcal{G}_{2}}{dP^{\nu}|_{\mathcal{G}_{1}}|\mathcal{G}_{2}} =\frac{E^{\nu}[\frac{d\mu}{d\nu}(X_{0})|\mathcal{G}_{1} \vee \mathcal{G}_{2}]}{E^{\nu}[\frac{d\mu}{d\nu}(X_{0})|\mathcal{G}_{2}]}~~~P^{\mu}~a.s.
\end{align*}
\end{lemma}

\begin{lemma}\label{tv_structure}
Assume $\mu \ll \nu$, for any two sigma fields $\mathcal{G}_{1},\mathcal{G}_{2} \subset \mathcal{F}^{\mathcal{X}}_{0,\infty} \vee \mathcal{F}^{\mathcal{Y}}_{0,\infty}$ we have $P^{\mu}$~a.s.
\begin{align*}
& \|P^{\mu}|_{\mathcal{G}_{1}}|\mathcal{G}_{2}-P^{\nu}|_{\mathcal{G}_{1}}|\mathcal{G}_{2}\|_{TV} \\
&=\frac{E^{\nu}\left[\left.~\left|E^{\nu}\left[\frac{d\mu}{d\nu}(X_{0})|\mathcal{G}_{1} \vee \mathcal{G}_{2} \right]-E^{\nu}\left[\frac{d\mu}{d\nu}(X_{0})|\mathcal{G}_{2} \right] \right|~\right|\mathcal{G}_{2} \right]}{E^{\nu}\left[\frac{d\mu}{d\nu}(X_{0})|\mathcal{G}_{2} \right]}
\end{align*}
\end{lemma}

For the specific case of the non-linear filter, that is $\mathcal{G}_{1}=\mathcal{F}^{\mathcal{X}}_{n}$ and $\mathcal{G}_{2}=\mathcal{F}^{\mathcal{Y}}_{0,n}$, the results presented above imply the following known results in the literature.
\begin{lemma}
\label{filter_rd_derivative}\cite[Lemma 5.6]{van2009stability}
Assume $\mu \ll \nu$. Then we have that $\pi_{n}^{\mu} \ll \pi_{n}^{\nu}$ a.s. and we have
\begin{align}\label{filt_rd_derivative}
\frac{d\pi_{n}^{\mu}}{d\pi_{n}^{\nu}}(x)&=\frac{E^{\nu}[\frac{d\mu}{d\nu}(X_{0})|Y_{[0,n]},X_{n}=x]}{E^{\nu}[\frac{d\mu}{d\nu}(X_{0})|Y_{[0,n]}]}~~~~~P^{\mu}~a.s.
\end{align}
\end{lemma}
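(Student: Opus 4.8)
The plan is to derive this as the direct specialization of Lemma~\ref{condition_structure} to the sigma fields appropriate for the filter. Recalling from the preliminaries that the non-linear filter is exactly $\pi_{n}^{\mu}=P^{\mu}|_{\mathcal{F}^{\mathcal{X}}_{n}}|\mathcal{F}^{\mathcal{Y}}_{0,n}$, I would apply Lemma~\ref{condition_structure} with $\mathcal{G}_{1}=\mathcal{F}^{\mathcal{X}}_{n}$ and $\mathcal{G}_{2}=\mathcal{F}^{\mathcal{Y}}_{0,n}$. This immediately yields
\[
\frac{d\pi_{n}^{\mu}}{d\pi_{n}^{\nu}}=\frac{E^{\nu}\left[\frac{d\mu}{d\nu}(X_{0})\,\middle|\,\mathcal{F}^{\mathcal{X}}_{n}\vee\mathcal{F}^{\mathcal{Y}}_{0,n}\right]}{E^{\nu}\left[\frac{d\mu}{d\nu}(X_{0})\,\middle|\,\mathcal{F}^{\mathcal{Y}}_{0,n}\right]}\qquad P^{\mu}\ a.s.
\]
Since Lemma~\ref{condition_structure} already records that the denominator $E^{\nu}[\frac{d\mu}{d\nu}(X_{0})\,|\,\mathcal{F}^{\mathcal{Y}}_{0,n}]$ is strictly positive $P^{\mu}$ a.s., the ratio is well defined and finite, and the very existence of this Radon--Nikodym derivative gives the absolute continuity $\pi_{n}^{\mu}\ll\pi_{n}^{\nu}$, $P^{\mu}$ a.s., which is the first assertion of the lemma.

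It then remains to rewrite the two conditional expectations in the notation of the statement. Since $\mathcal{F}^{\mathcal{X}}_{n}\vee\mathcal{F}^{\mathcal{Y}}_{0,n}=\sigma(X_{n},Y_{[0,n]})$, the Doob--Dynkin factorization lets me write the numerator as a measurable function $\psi(X_{n},Y_{[0,n]})$. For $P^{\mu}$-almost every realization $Y_{[0,n]}=y_{[0,n]}$, evaluating this function at the point $X_{n}=x$ gives precisely $E^{\nu}[\frac{d\mu}{d\nu}(X_{0})\,|\,Y_{[0,n]}=y_{[0,n]},X_{n}=x]$, the numerator in (\ref{filt_rd_derivative}); the denominator is already $\mathcal{F}^{\mathcal{Y}}_{0,n}$-measurable and equals $E^{\nu}[\frac{d\mu}{d\nu}(X_{0})\,|\,Y_{[0,n]}]$ by definition. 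Viewing the ratio as a function of $x$ with $y_{[0,n]}$ held fixed then produces the claimed expression for $\frac{d\pi_{n}^{\mu}}{d\pi_{n}^{\nu}}(x)$.

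The main subtlety, and the only real work beyond quoting Lemma~\ref{condition_structure}, is this last translation step: one must verify that the pointwise-in-$x$ function built from $\psi(x,y_{[0,n]})$ divided by $E^{\nu}[\frac{d\mu}{d\nu}(X_{0})\,|\,Y_{[0,n]}]$ is genuinely a version of the density of the regular conditional probability $\pi_{n}^{\mu}$ with respect to $\pi_{n}^{\nu}$ for $P^{\mu}$-a.e.\ $y_{[0,n]}$. Concretely, I would match the disintegration defining $\pi_{n}^{\nu}$ against the versions of the conditional expectations appearing in Lemma~\ref{condition_structure}, so that the abstract equality of Radon--Nikodym derivatives (holding $P^{\mu}$ a.s.\ on the joint path space) descends to the fiberwise identity (\ref{filt_rd_derivative}) on $\mathcal{X}$ for almost every measurement sequence. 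Because $\mathcal{X}$ and $\mathcal{Y}$ are Polish, regular conditional probabilities exist and this identification is standard, so I expect no genuine obstruction beyond careful bookkeeping of null sets across the two conditioning operations.
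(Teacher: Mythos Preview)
Your proposal is correct and follows essentially the same route as the paper: the paper presents Lemma~\ref{filter_rd_derivative} as the specialization of Lemma~\ref{condition_structure} with $\mathcal{G}_{1}=\mathcal{F}^{\mathcal{X}}_{n}$ and $\mathcal{G}_{2}=\mathcal{F}^{\mathcal{Y}}_{0,n}$, noting only that ``the results presented above imply the following known results in the literature.'' Your additional discussion of the Doob--Dynkin factorization and the passage from the abstract $P^{\mu}$-a.s.\ identity to the fiberwise density on $\mathcal{X}$ is more explicit than what the paper records, but it is exactly the bookkeeping implicit in the paper's one-line justification (compare the analogous one-line proof of Lemma~\ref{predictor_rd_derivative}).
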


%

%

\begin{lemma}\cite[Equation 1.10]{chigansky2009intrinsic}\label{filt_iff}
The filter merges in total variation in expectation if and only if $P^{\nu}$ a.s.
\begin{align}\label{sigma_field_condition}
E^{\nu}\left[\left.\frac{d\mu}{d\nu}(X_{0})\right|\bigcap_{n \geq 0}\mathcal{F}_{0,\infty}^{\mathcal{Y}}\vee \mathcal{F}_{n,\infty}^{\mathcal{X}}\right]&=E^{\nu}\left[\left.\frac{d\mu}{d\nu}(X_{0})\right|{F}_{0,\infty}^{\mathcal{Y}}\right|
\end{align} 
\end{lemma}

Since our results apply to any general sigma field, not just the fields used in the analysis of the filter, we can study the predictor process to establish Lemmas \ref{predictor_rd_derivative}, \ref{tv_expression_pred}, and \ref{pred_sigma_field_condition}, in the following.

\begin{lemma} \label{predictor_rd_derivative}
Assume $\mu \ll \nu$. Then we have that $\pi_{n-}^{\mu}\ll \pi_{n-}^{\mu}$ $P^{\mu}$ a.s. and we have
\begin{align}
\frac{d\pi_{n-}^{\mu}}{d\pi_{n-}^{\nu}}(x)&=\frac{E^{\nu}[\frac{d\mu}{d\nu}(X_{0})|Y_{[0,n-1]},X_{n}=x]}{E^{\nu}[\frac{d\mu}{d\nu}(X_{0})|Y_{[0,n-1]}]}~~~~~P^{\mu}~a.s.\label{pred_rd_derivative}
\end{align}
\end{lemma}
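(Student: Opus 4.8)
The plan is to recognize the one-step predictor as a particular instance of the restriction-and-conditioning operation already treated abstractly in Lemma \ref{condition_structure}, and then simply read off the conclusion. Recall from the preliminaries that the filter admits the representation $\pi_{n}^{\mu}=P^{\mu}|_{\mathcal{F}^{\mathcal{X}}_{n}}|\mathcal{F}^{\mathcal{Y}}_{0,n}$. The identical reasoning expresses the predictor as
\[
\pi_{n-}^{\mu}(\cdot)=P^{\mu}(X_{n}\in\cdot\,|\,Y_{[0,n-1]})=P^{\mu}|_{\mathcal{F}^{\mathcal{X}}_{n}}|\mathcal{F}^{\mathcal{Y}}_{0,n-1},
\]
the only difference from the filter being that we condition on the observations up to time $n-1$ rather than up to time $n$.

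First I would set $\mathcal{G}_{1}=\mathcal{F}^{\mathcal{X}}_{n}$ and $\mathcal{G}_{2}=\mathcal{F}^{\mathcal{Y}}_{0,n-1}$ in Lemma \ref{condition_structure}. This immediately produces
\[
\frac{d\pi_{n-}^{\mu}}{d\pi_{n-}^{\nu}}(x)=\frac{E^{\nu}[\frac{d\mu}{d\nu}(X_{0})|\mathcal{F}^{\mathcal{X}}_{n}\vee\mathcal{F}^{\mathcal{Y}}_{0,n-1}]}{E^{\nu}[\frac{d\mu}{d\nu}(X_{0})|\mathcal{F}^{\mathcal{Y}}_{0,n-1}]}\qquad P^{\mu}\text{ a.s.}
\]
Translating the sigma-field conditioning back into the random-variable notation, $\mathcal{F}^{\mathcal{X}}_{n}\vee\mathcal{F}^{\mathcal{Y}}_{0,n-1}$ corresponds to conditioning on $(Y_{[0,n-1]},X_{n}=x)$ and $\mathcal{F}^{\mathcal{Y}}_{0,n-1}$ to conditioning on $Y_{[0,n-1]}$, which is precisely the asserted identity (\ref{pred_rd_derivative}).

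The absolute continuity claim $\pi_{n-}^{\mu}\ll\pi_{n-}^{\nu}$ then follows automatically: the denominator $E^{\nu}[\frac{d\mu}{d\nu}(X_{0})|\mathcal{F}^{\mathcal{Y}}_{0,n-1}]$ is strictly positive $P^{\mu}$ a.s. (as already noted in the proof of Lemma \ref{condition_structure}), so the displayed Radon--Nikodym derivative is well-defined and finite $P^{\mu}$ a.s. The only place any care is needed is the correct identification of the two sigma fields; this is pure bookkeeping, entirely parallel to the filter case of Lemma \ref{filter_rd_derivative}, with the single index shift reflecting that the predictor conditions on one fewer measurement. I do not expect any genuine analytic obstacle here, since the substantive work was already carried out in establishing the general structural Lemmas \ref{restrict_structure} and \ref{condition_structure}.
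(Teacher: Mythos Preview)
Your proposal is correct and follows exactly the same approach as the paper: identify the predictor as $P^{\mu}|_{\mathcal{F}^{\mathcal{X}}_{n}}|\mathcal{F}^{\mathcal{Y}}_{0,n-1}$ and apply Lemma \ref{condition_structure} with $\mathcal{G}_{1}=\mathcal{F}^{\mathcal{X}}_{n}$ and $\mathcal{G}_{2}=\mathcal{F}^{\mathcal{Y}}_{0,n-1}$. The paper's own proof is in fact a single sentence to this effect, so your write-up is simply a more explicit version of the same argument.
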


\begin{proof}
These results become clear from Lemma \ref{condition_structure} when we state the predictor as $P^{\mu}$ restricted to $\mathcal{F}^{\mathcal{X}}_{n}$ conditioned on $\mathcal{F}^{\mathcal{Y}}_{0,n-1}$
\end{proof}

\begin{lemma}\label{tv_expression_pred}
Assume $\mu \ll \gamma$  for some measure $\gamma$. We can express
\begin{align}
& \|\pi_{n-}^{\mu}-\pi_{n-}^{\gamma}\|_{TV}= \nonumber \\
&\frac{E^{\gamma}\left[\left.\left|E^{\gamma}[\frac{d\mu}{d\gamma}(X_{0})|Y_{[0,\infty)},X_{[n,\infty)}]-E^{\gamma}[\frac{d\mu}{d\gamma}(X_{0})|Y_{[0,n-1]}]\right|\right|Y_{[0,n-1]}\right]}{E^{\gamma}\left[\left.\frac{d\mu}{d\gamma}(X_{0})\right|Y_{[0,n-1]}\right]}\label{tv_pred_as_martingale}
\end{align}
\end{lemma}

\begin{proof}
By Lemma \ref{tv_structure} we can write
\begin{align*}
& \|\pi_{n-}^{\mu}-\pi_{n-}^{\gamma}\|_{TV}= \\
&\frac{E^{\gamma}\left[\left.|E^{\gamma}[\frac{d\mu}{d\gamma}(X_{0})|Y_{[0,n-1]},X_{n}]-E^{\gamma}[\frac{d\mu}{d\gamma}(X_{0})|Y_{[0,n-1]}]|\right|Y_{[0,n-1]}\right]}{E^{\gamma}\left[\left.\frac{d\mu}{d\gamma}(X_{0})\right|Y_{[0,n-1]}\right]}
\end{align*}

Since $Y_{n}$ is a function of $X_{n}$ and the random noise $Z_{n}$ which is independent of $X_{n}$ and past $Y_{[0,n-1]}$ measurements, we have that $\sigma(Y_{[0,n-1]},X_{n})=\sigma(Y_{[0,n]},X_{n})$. Further, by the Markov property we have that we have that $(X_{[0,n-1]},Y_{[0,n-1]})$ are independent of $(X_{[n+1,\infty)},Y_{[n+1,\infty)})$ conditioned on $(X_{n},Y_{n})$ therefore we can state
\begin{align*}
E^{\gamma}[\frac{d\mu}{d\nu}(X_{0})|Y_{[0,n-1]},X_{n}]&=E^{\gamma}[\frac{d\mu}{d\nu}(X_{0})|Y_{[0,\infty)},X_{[n,\infty)}]
\end{align*}
\end{proof}

\begin{lemma}\label{pred_sigma_field_condition}
The predictor merges in total variation in expectation if and only if
\begin{align}
E^{\nu}\left[\left.\frac{d\mu}{d\nu}(X_{0})\right|\bigcap_{n \geq 1}\mathcal{F}_{0,\infty}^{\mathcal{Y}}\vee \mathcal{F}_{n,\infty}^{\mathcal{X}}\right]&=E^{\nu}\left[\left.\frac{d\mu}{d\nu}(X_{0})\right|{F}_{0,\infty}^{\mathcal{Y}}\right]~P^{\nu}~a.s.\label{pred_sigma_eqn}
\end{align} 
\end{lemma}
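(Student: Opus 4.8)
The plan is to reduce the statement ``predictor merges in total variation in expectation'' to an $L^1(P^{\nu})$ convergence of a difference of two martingales, and then read off the condition from the equality of their limits. The engine is Lemma \ref{tv_expression_pred} together with the change-of-measure identity of Lemma \ref{restrict_structure}.

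First I would take $\gamma=\nu$ in Lemma \ref{tv_expression_pred}, so that $\|\pi_{n-}^{\mu}-\pi_{n-}^{\nu}\|_{TV}$ is written as a ratio whose numerator is a conditional expectation given $Y_{[0,n-1]}$ and whose denominator is $E^{\nu}[\frac{d\mu}{d\nu}(X_{0})|Y_{[0,n-1]}]$. The key structural observation is that this entire quantity is $\mathcal{F}^{\mathcal{Y}}_{0,n-1}$ measurable, and that by Lemma \ref{restrict_structure} its denominator is exactly the restricted Radon--Nikodym derivative $\frac{dP^{\mu}|_{\mathcal{F}^{\mathcal{Y}}_{0,n-1}}}{dP^{\nu}|_{\mathcal{F}^{\mathcal{Y}}_{0,n-1}}}$. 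I would then compute the expected total variation by changing measure from $P^{\mu}$ to $P^{\nu}$: since the integrand is $\mathcal{F}^{\mathcal{Y}}_{0,n-1}$ measurable, multiplying by this derivative converts $E^{\mu}$ into $E^{\nu}$, the denominator cancels against the derivative, and the tower property collapses the inner conditional expectation. This leaves
\[
E^{\mu}[\|\pi_{n-}^{\mu}-\pi_{n-}^{\nu}\|_{TV}] = E^{\nu}\left[\left|M_{n}-N_{n}\right|\right],
\]
where $M_{n}=E^{\nu}[\frac{d\mu}{d\nu}(X_{0})|\mathcal{F}^{\mathcal{Y}}_{0,\infty}\vee\mathcal{F}^{\mathcal{X}}_{n,\infty}]$ and $N_{n}=E^{\nu}[\frac{d\mu}{d\nu}(X_{0})|\mathcal{F}^{\mathcal{Y}}_{0,n-1}]$, using the identity $\sigma(Y_{[0,n-1]},X_{n})=\mathcal{F}^{\mathcal{Y}}_{0,\infty}\vee\mathcal{F}^{\mathcal{X}}_{n,\infty}$ established inside the proof of Lemma \ref{tv_expression_pred}.

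Next I would note that $\frac{d\mu}{d\nu}(X_{0})\in L^{1}(P^{\nu})$ (its $P^{\nu}$ expectation is $1$) and invoke martingale convergence in both directions. The sequence $M_{n}$ is a reverse martingale along the decreasing family $\mathcal{F}^{\mathcal{Y}}_{0,\infty}\vee\mathcal{F}^{\mathcal{X}}_{n,\infty}$, hence converges in $L^{1}(P^{\nu})$ to $E^{\nu}[\frac{d\mu}{d\nu}(X_{0})|\bigcap_{n\geq 1}\mathcal{F}^{\mathcal{Y}}_{0,\infty}\vee\mathcal{F}^{\mathcal{X}}_{n,\infty}]$; the sequence $N_{n}$ is a forward martingale along the increasing family $\mathcal{F}^{\mathcal{Y}}_{0,n-1}$, hence by L\'evy's upward theorem converges in $L^{1}(P^{\nu})$ to $E^{\nu}[\frac{d\mu}{d\nu}(X_{0})|\mathcal{F}^{\mathcal{Y}}_{0,\infty}]$. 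Since both converge in $L^{1}$, the triangle inequality gives $E^{\nu}[|M_{n}-N_{n}|]\to E^{\nu}[|M_{\infty}-N_{\infty}|]$, and this limit vanishes if and only if $M_{\infty}=N_{\infty}$ $P^{\nu}$ a.s., which is precisely condition (\ref{pred_sigma_eqn}).

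The hard part will be the change-of-measure step: one must verify carefully that the denominator furnished by Lemma \ref{tv_expression_pred} coincides exactly with the restricted Radon--Nikodym derivative, so that it cancels cleanly, and that the expressions involved are genuinely $\mathcal{F}^{\mathcal{Y}}_{0,n-1}$ measurable so the passage from $E^{\mu}$ to $E^{\nu}$ is legitimate. Once this reduction is in place, everything downstream is a routine application of the forward and reverse martingale convergence theorems, and the mismatch in the intersection index ($n\geq 1$ here versus $n\geq 0$ for the filter in Lemma \ref{filt_iff}) is immaterial since the intersection of a decreasing family is unaffected by dropping finitely many terms.
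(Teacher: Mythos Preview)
Your proposal is correct and follows essentially the same route as the paper: change measure from $P^{\mu}$ to $P^{\nu}$ using Lemma \ref{restrict_structure} so that the denominator of Lemma \ref{tv_expression_pred} cancels, collapse via the tower property to $E^{\nu}[|M_n-N_n|]$, and then invoke forward and reverse martingale convergence to identify the limit. One minor remark: the phrase ``identity $\sigma(Y_{[0,n-1]},X_n)=\mathcal{F}^{\mathcal{Y}}_{0,\infty}\vee\mathcal{F}^{\mathcal{X}}_{n,\infty}$'' is not literally an equality of $\sigma$-fields---what is used (and what Lemma \ref{tv_expression_pred} actually proves) is that the conditional expectations of $\frac{d\mu}{d\nu}(X_0)$ on these two $\sigma$-fields coincide by the Markov property.
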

\begin{proof}
Building on the proof of Lemma \ref{tv_expression_pred}, we have
\begin{align*}
&E^{\mu}\left[\|\pi_{n-}^{\mu}- \pi_{n-}^{\nu}\|_{TV} \right]=E^{\nu}\left[\frac{dP^{\mu}|_{\mathcal{F}^{\mathcal{Y}}_{0,n-1}}}{dP^{\nu}|_{\mathcal{F}^{\mathcal{Y}}_{0,n-1}}}\|\pi_{n-}^{\mu}- \pi_{n-}^{\nu}\|_{TV} \right]\\
&=E^{\nu}\left[E^{\nu}\left[\left.\frac{d\mu}{d\nu}(X_{0})\right|Y_{[0,n-1]}\right]\|\pi_{n-}^{\mu}- \pi_{n-}^{\nu}\|_{TV} \right]\\
&=E^{\nu} [E^{\nu}[|E^{\nu}[\frac{d\mu}{d\nu}(X_{0})|Y_{[0,\infty)},X_{[n,\infty)}] \\
& \quad \quad -E^{\nu}[\frac{d\mu}{d\nu}(X_{0})|Y_{[0,n-1]}]| |Y_{[0,n-1]} ] ]\\
&=E^{\nu}\left[|E^{\nu}[\frac{d\mu}{d\nu}(X_{0})|Y_{[0,\infty)},X_{[n,\infty)}]-E^{\nu}[\frac{d\mu}{d\nu}(X_{0})|Y_{[0,n-1]}]|\right]
\end{align*}
We then see that $A_{n}=E^{\nu}[\frac{d\mu}{d\nu}(X_{0})|Y_{[0,n-1]}]$ is a non-negative  uniformly integrable martingale adapted to the increasing filtration $\mathcal{F}^{\mathcal{Y}}_{0,n-1}$. Hence the limit as $n \to \infty$ in $L^{1}(P^{\nu})$ is $E^{\nu}[\frac{d\mu}{d\nu}(X_{0})|\mathcal{F}^{\mathcal{Y}}_{0,\infty}]$. Similarly, we can view $B_{n}=E^{\nu}[\frac{d\mu}{d\nu}(X_{0})|Y_{[0,\infty)},X_{[n,\infty)}]$ as backwards non-negative uniformly integrable martingale with respect to the decreasing sequence of filtrations $\mathcal{F}_{0,\infty}^{\mathcal{Y}}\vee \mathcal{F}_{n,\infty}^{\mathcal{X}}$. Then by the backwards martingale convergence theorem, the limit as $n \to \infty$ in $L^{1}(P^{\nu})$ is $E^{\nu}[\frac{d\mu}{d\nu}(X_{0})|\bigcap_{n=1}^{\infty}\mathcal{F}^{\mathcal{Y}}_{1,\infty}\vee \mathcal{F}^{\mathcal{X}}_{n,\infty}]$. It is then clear the total variation in expectation is zero if and only if equation (\ref{pred_sigma_eqn}) holds.
\end{proof}

\end{document}